\documentclass{amsart}
\vfuzz2pt
\pagestyle {empty}

\newcommand{\be}{\begin{equation}}
\newcommand{\ee}{\end{equation}}
\newcommand{\bea}{\begin{eqnarray}}
\newcommand{\eea}{\end{eqnarray}}

\newcommand{\bee}{\begin{eqnarray*}}
\newcommand{\eee}{\end{eqnarray*}}

 \newtheorem{thm}{Theorem}[section]
 
 \newtheorem{lem}[thm]{Lemma}
 \newtheorem{prop}[thm]{Proposition}
 \theoremstyle{definition}
 \newtheorem{defn}[thm]{Definition}
 \newtheorem{fac}[thm]{Fact}
 \theoremstyle{remark}

\usepackage{enumitem}
 \usepackage{color}

 \title [STUDY OF $\mathrm{b}$-GENERALIZED SKEW  DERIVATIONS ON LIE IDEAL]{A CHARACTERIZATION OF $b$-GENERALIZED SKEW  DERIVATIONS ON LIE IDEAL of A PRIME RING}
 \author[A. Pandey]{ Ashutosh Pandey }
\address{A. Pandey, School of Liberal Studies, Ambedkar University Delhi, Delhi-110006, INDIA.\\ Orcid Id 0000-0002-0002-3312}
\email{ashutoshpandey064@gmail.com}
\author[M. S. Pandey]{Mani Shankar Pandey }
\address{M. S. Pandey, Department of Sciences, Indian Institute of Information Technology Design and Manufacturing Kurnool, 518007 - India.\\ Orcid Id 0000-0003-3829-4430}
 \thanks{{\it Mathematics Subject Classification 2010.} 16N60, 16W25 }
\thanks{{\it Key Words and Phrases.}  Utumi quotient ring, $\mathrm{b}$-generalized skew derivation, extended centroid.}
\begin{document}
\maketitle

\begin{abstract}
Let $R$ be a prime ring of characteristic different from $2$, $U$ be its Utumi quotient ring, $C$ be its extended centroid and $L$  be a non-central Lie ideal of $R$. Suppose $F$ and $G$ are two non-zero $b$-generalized skew derivations of $R$ associated with the same automorphism $\alpha$ such that $$puF(u) + F(u)uq = G(u^2),\ \text{with} \ p + q \notin C,~~ \text{for all $u \in L$. }$$
This paper aims to study the complete structure of the $\mathrm{b}$-generalized skew derivations $F$ and $G$.
\end{abstract}
\section{Introduction}
A ring $R$ is said to be prime if, for any $x, y \in R$, $xRy = 0$ implies either $x = 0$ or $y = 0$. Throughout this research article, unless specifically stated, $R$ will denote a prime ring with center $\mathcal{Z}(R)$, and $U$ will denote its Utumi quotient ring. It is worth noting that the quotient ring $U$ is also a prime ring for any prime ring $R$, and its center $C$ is termed as the extended centroid of $R$, which is a field. The definition and construction of $U$ can be found in \cite{beidar1995}. To simplify computations, we use the notation $[x, y] = xy -yx$ for all $x, y \in R$. Let $L$ and $M$ be two subrings of $R$. We define $[L, M]$ as the subgroup of $R$ generated by the set $\langle[l, m] \mid l \in L, m \in M\rangle$. A subset $L$ of $R$ is considered a Lie ideal of $R$ if it is an additive subgroup that satisfies $[L, R] \subseteq L$.

A linear mapping $\Delta:R\rightarrow R$ is said to be a derivation if $$\Delta(xy)=\Delta(x)y +x\Delta(y)$$ for all $x,y\in R$. For a fixed $p \in R$, the mapping $\Delta_p: R \rightarrow R$, defined by $\Delta_p (x ) = [p, x ]$ for all $x \in R$ is a derivation, known as inner derivation induced by an element ${p}$. A derivation that is not inner is called an outer derivation. In 1957, Posner \cite{posner1957} proved that if $\Delta$ is a non-trivial derivation of a prime ring $R$ such that $[\Delta(x), x] \in \mathcal{Z}(R)$ for all $x \in R$, then $R$ is a commutative ring. Posner's results were subsequently generalized by mathematicians in various aspects.

In 1991, M. Bre$\check{s}$ar \cite{Breaser1991} introduced a new type of derivation and termed it as generalized derivation. 
\begin{defn}
A linear mapping $\Phi: R\rightarrow R$ is said to be a generalized derivation if there exists a derivation $d$ on $R$ such that $$\Phi(xy)=\Phi(x)y+xd(y),\ \text{for all $x,y \in R$.}$$   
\end{defn}
For fixed $\alpha,\beta \in R$, the mapping $\Phi_{(\alpha,\beta)}: R \rightarrow R$ defined by $\Phi_{(\alpha,\beta)}(x) = \alpha x+ x\beta$ is a generalized derivation on $R$, and is usually termed as a generalized inner derivation on $R$.
\begin{defn}
   A linear mapping $T:{R}\longrightarrow R$ is said to be a skew derivation associated with the automorphism $\alpha\in Aut(R)$  if $$T(xy) = T(x)y+ \alpha(x)T(y), \ \text{for all $x,{y} \in {R}$.}$$
\end{defn}  
A skew derivation associated with the identity automorphism is simply a derivation. Specifically, for a fixed element $b \in {U}$ (the Utumi quotient ring), the map defined as $x \mapsto bx - \alpha(x)b$ is a well-known example of a skew derivation called an inner skew derivation. On the other hand, if a skew derivation is not of this form, it is referred to as an outer skew derivation.  
\begin{defn}
A linear mapping $\psi: R \longrightarrow R$ is called a generalized skew derivation associated with the automorphism $\alpha \in \text{Aut}(R)$ if there exists a skew derivation $\Delta$ on $R$ such that $\psi(xy) = \psi(x)y + \alpha(x)\Delta(y)$ for all $x, y \in R$.
\end{defn}  
   A skew derivation associated with the identity map is a derivation and a generalized skew derivation associated with identity automorphism is a generalized derivation.  
 In 2021, De Filippis \cite{de2020} studied the    generalized skew derivation identity $\mathcal{F}(\mathcal{G}(x))=0$ for all $x\in L$, on the Lie ideal  $L$ of a prime ring $R$, where $\mathcal{F},\ \mathcal{G}$ are generalized skew derivations on the prime ring $R$. 
 
 In a study conducted in 2018 by De Filippis and Wei  \cite{de2018}, the notion of $\mathrm{b}$-generalized skew derivation was introduced. This notion expands upon the concept of derivations and explores various types of linear mappings in the context of non-commutative algebras. 
  \begin{defn}
Let $R$ be an associative  ring,  $\mathrm{d}:R \rightarrow U$ be a linear mapping, $\alpha$ be an automorphism of $R$, and $\mathrm{b}$ be a fixed element in the Utumi quotient ring $U$. Then a linear mapping $F : R \rightarrow U$, is said to be a $\mathrm{b}$-generalized skew derivation of $R$ associated with the triplet  $(\mathrm{b},\alpha,\mathrm{d})$ if $$F(xy) = F(x)y+ \mathrm{b}\alpha(x)\mathrm{d}(y),\ \text{for all $x,y\in R$.}$$ 
\end{defn}
Moreover, the authors have proved that if  $\mathrm{b}\neq 0$, the associated additive map $\mathrm{d}$, defined above, is a skew derivation. Further, it has been shown that the additive mapping $F$ can be extended to the Utumi quotient ring $U$ and it assumes the form $F(x)= \mathrm{a}x+ \mathrm{b}\mathrm{d}(x)$, where $\mathrm{a} \in U$. The concept of  $\mathrm{b}$-generalized skew derivation with the associated triplet $(\mathrm{b}, \alpha, \mathrm{d})$ covers the concepts of skew derivation, generalized derivations, and left multipliers, etc.  For instance if we choose $\mathrm{b}= 1$, $\mathrm{b}$-generalized skew derivation becomes a skew derivation and for $\mathrm{b} = 1, \alpha=I_{R}$, $\mathrm{b}$-generalized skew derivation becomes a generalized derivation, where $I_{R}$ is the identity map on the ring $R$. Further, if we take $\mathrm{b}=0$, then the $\mathrm{b}$-generalized skew derivation becomes a left multiplier map.  Let $R$ be a prime ring and $\alpha$ be an automorphism of $R$. Then the mapping $F : R\longrightarrow U$ given by $ x \mapsto  ax + \mathrm{b}\alpha(x)c$ is a $\mathrm{b}$-generalized skew derivation of $R$ with associated triplet $(\mathrm{b},\alpha,\mathrm{d})$, where $a, b$ and $c$ are fixed elements in $R$ and $d(x) = \alpha(x)c - cx$, for all $x \in R$, such $\mathrm{b}$-generalized skew derivation is called  inner $\mathrm{b}$-generalized skew derivation.

From the preceding definition, it can be concluded that the general results regarding $\mathrm{b}$-generalized skew derivations offer valuable and impactful corollaries concerning derivations, generalized derivations, and generalized skew derivations. These findings have the potential to provide significant insights and applications in the study of these related concepts.

 It is very natural to see what happens whenever we replace the derivations  by $\mathrm{b}$-generalized skew derivations in Posner's and Bre$\check{s}$ar's results. In 2021, Filippis et. al.  \cite{V. D. Filippis 2021} attempted and succeeded partially in giving a generalization of  Bre$\check{s}$ar's result by studying the $\mathrm{b}$-generalized skew derivation identity $\mathcal{F}(u)u - u\mathcal{G}(u) = 0$ on a prime ring $R$, for all $u\in\{\phi(x)\ |\ x=(x_1,\ldots,x_n) \in \ {R}^n\}$,  where $\phi(x)$ is a multilinear polynomial over $C$ and $\mathcal{F},\ \mathcal{G}$ are $\mathrm{b}$-generalized skew derivations on the prime ring $R$. Many generalizations which are relevant to our discussion about $\mathrm{b}$-generalized skew derivations can be found in \cite{A23, de2018, de 2018, V. D. Filippis 2021}. Continuing our investigation, we focus on the $\mathrm{b}$-generalized skew derivation identity $puF(u) + F(u)uq = G(u^2)$, where $p + q \not\in C$ for all $u \in L$. We present the theorem that establishes our result:

\begin{thm} \label{AMP}
Let $R$ be a prime ring of characteristic different from $2$, $U$ be its Utumi quotient ring, $C$ be its extended centroid and $L$ be a non-central Lie ideal of $R$. Suppose $F$ and $ G$ be two non-zero $b$-generalized skew derivations of $R$ with associated terms $(b, \alpha, d)$ and $(b,\alpha, h)$ respectively, satisfying the identity
$$puF(u) + F(u)uq = G(u^2)\ \text{for some}\ p,q\in R\ \text{with}\ p + q \not\in C, \ \forall\ u \in L.$$ Then for all $x \in R$ one of the following holds:
\begin{enumerate}
    \item There exists $a \in C$, and $c \in U$ for which $F(x) = ax, G(x) = xc$ with $pa =
c - aq \in C$ and $p \in C$.
\item There exists $a \in C$, and $c, c^{\prime} \in U$ for which $F(x) = ax$, $G(x) = cx + xc^{\prime}$ with
$(pa- c)=c^{\prime}-qa\in C$ and $pa-c\in C$.
\item There exists $a \in C$, and $c \in U$ for which $F(x) = ax, G(x) = cx$ with $(p+q)a = c$
and $q \in C$.
\item There exist  $a,c^{\prime}\in C$ and $c \in U$ for which $F(x) = ax$, $G(x) = (c+c^{\prime})x$ with $aq =
c +c^{\prime}- ap \in C$.
\item There exists $a,c\in C$ and $c^{\prime}\in U$ for which $F(x)=ax$ and $G(x)=x(c+c^{\prime})$ with $p\in C$ and $pa=(c+c^{\prime})-qa\in C.$
\item There exists $a,b, u,v,t \in U$ and $\lambda, \eta \in C$ such that $F(x)=(a+bu)x$, $G(x)=cx+btxt^{-1}v$ with $t^{-1}v+\eta  q=\lambda\in C$, $(a+bu)+\eta bt=0,$ and $p(a+bu)-c= \lambda bt$.
\item If $R$ satisfies $s_4$ and we have the following conclusions:
\begin{enumerate}
    \item There exist $a,b,c,u,v \in U$ such that $F(x) = (a+bu)x$, $G(x) = cx + xbv$ with $(p+q)(a+bu)=(c+bv)$ for all $x \in R$.
   \item There exist $a,b,c,u,v \in U$ and an invertible $t\in U$ such that $F(x) = (a+bu)x$, $G(x) = cx + btxt^{-1}v$ for all $x \in R$ with $(p+q)(a+bu)=(c+bv)$. 
\end{enumerate}
\end{enumerate}
\end{thm}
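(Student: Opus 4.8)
\medskip
\noindent\textbf{Proof proposal.}
The plan is to run the standard programme for generalized skew differential identities on a Lie ideal: first reduce the hypothesis to an identity on the whole ring, then peel off the skew derivations and the automorphism, and finally finish with a linear-algebra computation on a matrix ring. Since $\mathrm{char}\,R\neq2$ and $L$ is non-central, the classical theory of Lie ideals (Lanski--Montgomery, Bergen--Herstein--Kerr) gives a dichotomy: either $R$ satisfies the standard identity $s_4$, which is exactly alternative~(7), or there is a non-zero two-sided ideal $I$ of $R$ with $0\neq[I,R]\subseteq L$. Assume the latter. Since a non-central Lie ideal and $R$ satisfy the same generalized polynomial identities and the same (skew) differential identities, it suffices to study the hypothesis with $u$ replaced by the commutator $[x,y]$, $x,y\in U$.

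By the De Filippis--Wei structure theorem we may write $F(x)=ax+b\,d(x)$ and $G(x)=a'x+b\,h(x)$ with $a,a'\in U$ and $d,h$ skew derivations associated with $\alpha$. Expanding $F([x,y])$ and $G([x,y]^2)$ via the $b$-generalized skew Leibniz rule turns $puF(u)+F(u)uq=G(u^2)$ into a skew differential identity for $U$ in the symbols $x,y,\alpha(x),\alpha(y),d(x),d(y),h(x),h(y)$ and the coefficients $b,p,q,a,a'$. Now apply Kharchenko's theorem for skew derivations (in the Chuang--Lee form). If $d$ is not an inner skew derivation of $U$, the $d$-values become independent indeterminates; comparing coefficients and using $p+q\notin C$ forces $a\in C$, $F(x)=ax$, together with strong restrictions on $G$, which is what yields alternatives~(1)--(5). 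If $d$ is inner, then $d(x)=\alpha(x)w-wx$ for some $w\in U$, so $F$ is an inner $b$-generalized skew derivation; handle $h$ the same way. In every branch we are reduced to a genuine GPI for $U$, possibly still involving $\alpha$.

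If $\alpha$ is not inner, Chuang's theory of automorphism identities allows us to treat $\alpha(x)$ as a further independent indeterminate; if $\alpha$ is inner, $\alpha(x)=vxv^{-1}$ for an invertible $v\in U$ and it is absorbed into the coefficients. Either way $U$ satisfies a non-trivial GPI, so by Martindale's theorem $U$ is a primitive ring with non-zero socle whose commuting division ring $D$ is finite-dimensional over $C$, and after localizing at a rank-one idempotent we may replace $U$ by $M_n(D)$. Evaluating the GPI on matrix units $e_{ij}$ pins down the coefficients $a,a',w$ and the fixed elements $p,q$ up to exactly the displayed relations (e.g. $pa=c-aq\in C$ with $p\in C$, or $(p+q)a=c$ with $q\in C$), and forces $D=C$ when $n\geq3$. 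The case $n=2$, where $R$ satisfies $s_4$ and the higher-rank arguments degenerate, has to be carried out directly; this is the source of the conjugated term $b\,t\,x\,t^{-1}v$ in~(6) and of the sub-cases~(7a),(7b). The case $n=1$ cannot occur, since $L$ is non-central.

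The genuinely laborious part is the bookkeeping over the product of dichotomies --- $d,h$ inner versus outer, $\alpha$ inner versus outer, $b=0$ versus $b\neq0$, and the matrix size $n$ --- while keeping the hypothesis $p+q\notin C$ active, so that the collapse to each precise relation (and in particular to the conclusions $p\in C$ or $q\in C$) is genuinely forced and not merely possible. Independently, the $n=2$ analysis is delicate precisely because $s_4$ destroys the usual rigidity, so one must solve the resulting linear system over $C$ by hand; this is what produces the more exotic alternatives~(6) and~(7).
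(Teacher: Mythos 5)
Your toolkit is the right one and is essentially the paper's own (reduction of $L$ to $[I,R]$, Lee's representation $F(x)=ax+bd(x)$, $G(x)=cx+bh(x)$, Chuang--Lee for the skew derivations, Chuang's theorem for the automorphism, Martindale/density/matrix-unit work for the resulting GPI), but as written the proposal has concrete defects and leaves the actual substance of the theorem unproved. First, the opening dichotomy is not correct: for $\mathrm{char}\,R\neq 2$ and $L$ non-central an ideal $I$ with $0\neq[I,R]\subseteq L$ \emph{always} exists; ``$R$ satisfies $s_4$'' is not the complementary branch, and it is certainly not ``exactly alternative (7)'' --- conclusion (7) asserts specific inner forms of $F$ and $G$ together with $(p+q)(a+bu)=c+bv$, which still has to be derived. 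In the paper the $s_4$ alternatives emerge later, inside the inner case, from the branch of Lemma \ref{prop1A} where $[x,y]^2$ is central-valued. Second, your description of the outer branches does not match what the computation gives and is asserted without proof: when $d$ (or $h$) is genuinely outer, replacing its values by free variables does not ``force $a\in C$ and yield (1)--(5)''; the blended components force $b=0$ or $p+q\in C$, so these branches collapse back to the inner case or to a contradiction, and \emph{all} of (1)--(7) are extracted from the inner case, i.e.\ from the long matrix-unit analysis of Proposition \ref{prop1} (via Lemmas \ref{prop1A}--\ref{lem2}), which is exactly the ``bookkeeping'' you defer. Since the theorem consists precisely of those sharply constrained alternatives ($p\in C$, $q\in C$, the $\lambda,\eta$ relations in (6), the two $s_4$ subcases), deferring that analysis means the statement is not proved.

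There is also a missing idea in the outer--outer situation: when both $d$ and $h$ are outer you cannot simply treat the values of each as independent indeterminates; you must first split according to whether $d$ and $h$ are $C$-linearly dependent modulo X-inner skew derivations. The dependent case (the paper's Cases 1--3 at the end, which use Facts \ref{x} and \ref{1} and the uniqueness of the associated automorphism) needs its own argument --- it either reduces to the inner case or is killed by a blended-component computation --- and your sketch never mentions it. Finally, the reduction ``localize at a rank-one idempotent and replace $U$ by $M_n(D)$, forcing $D=C$ when $n\geq3$'' is garbled: after central closure Martindale's theorem gives a primitive ring with nonzero socle having $C$ itself as the associated division ring, the finite-dimensional case is $M_m(C)$, and the infinite-dimensional case is handled with Litoff idempotents $e$ and the identity pushed into $eRe$; no statement about a division ring $D$ for $n\geq 3$ is needed or obtained.
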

The general strategy for attacking the main theorem can be used to demonstrate
a more general result for  multilinear polynomials. As a result, from a purely
theoretical standpoint, there is no difference between the Lie ideal case and the
multilinear polynomial case. Adopting the proof in the case of multilinear polynomials is to avoid more calculations.

The organization of the paper is as follows. In Section $2$, we recall some basic facts about prime rings. The next section discusses the case when the $\mathrm{b}$-generalized skew derivations $F$ and $G$ are inner. In Section $4$, we prove our main theorem by considering each case in detail. 
\section{Preliminaries and Notations}
We frequently utilize the following facts to establish our results:
\begin{fac}\cite{beidar1978ri}\label{fac2}
	Let $R$ be a prime ring and $I$ a two-sided ideal of $R$. Then $R$, $I$, and $U$ satisfy the same generalized polynomial identities with coefficients in $U$.
\end{fac}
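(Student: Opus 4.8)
The plan is to exploit the chain of inclusions $I \subseteq R \subseteq U$ together with the fact that all three rings share the same structural data. First I would record the trivial half: if a generalized polynomial identity (GPI) $g(x_1,\dots,x_n)$ with coefficients in $U$ is satisfied by $U$, then since $I$ and $R$ are subsets of $U$, every substitution of elements of $I$ (resp.\ of $R$) is a special case of a substitution of elements of $U$, so $g$ is automatically a GPI of $I$ and of $R$. Thus
$$U \models g \ \Longrightarrow\ R \models g \ \Longrightarrow\ I \models g,$$
and the whole content of the statement is the reverse implication $I \models g \Rightarrow U \models g$.

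To organize the reverse direction, I would reduce everything to a single core assertion: for any prime ring $S$, the ring $S$ and its Utumi quotient ring $U(S)$ satisfy exactly the same GPIs with coefficients in $U(S)$. Granting this, the theorem follows by applying it twice. Indeed, a nonzero two-sided ideal $I$ of the prime ring $R$ is itself a prime ring, and a basic property of quotient rings gives $U(I) = U(R) = U$ and $C(I) = C(R) = C$; hence $\mathrm{GPI}(I) = \mathrm{GPI}(U) = \mathrm{GPI}(R)$, which is the desired conclusion. So the real work lies in the core assertion, and there only the lift $S \models g \Rightarrow U \models g$ requires proof.

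For the lift I would first normalize $g$. By a standard homogenization (partial linearization) argument, vanishing of $g$ on $S$ forces vanishing of each multihomogeneous component, so it suffices to treat multilinear generalized monomials; when $C$ is finite---where homogenization is not immediately available---I would pass to a scalar extension $S \otimes_C K$ over an infinite field $K$, invoking the standard lemmas that this preserves primeness, realizes $K$ as the new extended centroid, and neither creates nor destroys the GPIs under consideration. Writing a multilinear $g$ in terms of a $C$-basis of $U$, I would then use the density property of the quotient ring, namely that each $u \in U$ is controlled by a nonzero ideal $I_0$ of $R$ contained in $I$, in the sense that $I_0 u \subseteq R$ and $u I_0 \subseteq R$. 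Given arbitrary $u_1,\dots,u_n \in U$, the element $w = g(u_1,\dots,u_n) \in U$ can be tested against such ideals; substituting ring elements that approximate the $u_j$ and invoking that $S$ already annihilates $g$, a Vandermonde / $C$-linear-independence argument strips off the auxiliary ideal factors and yields $w = 0$, whence $g$ is a GPI of $U$.

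The main obstacle is precisely this last linear-independence step: one must show that vanishing of the \emph{evaluated} generalized polynomial on the dense subring forces vanishing of the \emph{formal} generalized polynomial over $U$, and this is where the primeness of $U$ and the independence of distinct generalized monomials relative to a $C$-basis of $U$ are essential. A secondary technical point, which I would treat carefully rather than gloss over, is the finite extended centroid case, where the homogenization reduction must be routed through a faithful scalar extension that leaves the relevant GPIs intact.
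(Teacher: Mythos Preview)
The paper does not prove this statement at all: it is recorded as a ``Fact'' with a citation to Beidar's work and is used throughout without further justification. So there is no paper proof to compare against.

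Your outline follows the classical route to this result: reduce via $U(I)=U(R)=U$ to the single assertion that a prime ring and its Utumi quotient ring share the same GPIs with coefficients in $U$, then lift a GPI from $S$ to $U$ by multilinearizing, choosing a $C$-basis of $U$, and using density of nonzero ideals in $U$ to pass from ring substitutions to arbitrary $U$-substitutions. This is the right architecture and matches the standard treatments (Beidar; Chuang's orthogonal-completion viewpoint; Beidar--Martindale--Mikhalev). The places where your sketch is thinnest are exactly the ones you flag: the ``Vandermonde / $C$-linear-independence'' step is doing real work and, as written, is not a proof---one needs either the orthogonal completion machinery or a careful argument that vanishing on a dense ideal forces the formal identity in the free product $U *_C C\{X\}$ to be zero, using that distinct generalized monomials in a $C$-basis of $U$ are $C$-independent in that free product. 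Your handling of the finite-$C$ case via scalar extension is also more delicate than you indicate, since one must check that the relevant GPIs transfer along $S \to S\otimes_C K$ in both directions. None of this is wrong, but if you were asked to actually write the proof you would need to fill these in rather than gesture at them.
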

\begin{fac} \cite{lee1992semiprime}\label{fac3}
	Let $R$ be a prime ring and $I$ a two-sided ideal of $R$. Then $R, I$ and $U$
	satisfy the same differential identities.
\end{fac}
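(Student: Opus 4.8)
The plan is to deduce this statement from its purely generalized-polynomial-identity counterpart, Fact \ref{fac2}, by means of Kharchenko's reduction of differential identities to generalized polynomial identities (GPIs). First I would fix, once and for all, the observation that derivations transfer unambiguously among the three rings: every derivation of the two-sided ideal $I$ extends uniquely to a derivation of $R$, and every derivation of $R$ extends uniquely to a derivation of $U$; conversely the restriction to $I$ of such a derivation is again a derivation. This rests on the density of a nonzero ideal of a prime ring (it is essential as a one-sided ideal) together with the universal property of the Utumi quotient ring, and it guarantees that a differential identity written for any one of $R$, $I$, $U$ may be read as involving the same finite family of derivations $d_1,\dots,d_m$ of $U$ in all three contexts.

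Next I would carry out the core reduction. Given a differential identity, say $\Phi\bigl(x_j,\, d_{i_1}\cdots d_{i_k}(x_j)\bigr)=0$ involving regular words in the $d_i$, the first move is to replace the family $\{d_1,\dots,d_m\}$ by a subfamily that is linearly independent modulo the inner derivations over the extended centroid $C$, absorbing every inner part $x\mapsto [w,x]$ (with $w\in U$) and every $C$-linear dependence into the generalized-polynomial coefficients. Kharchenko's theorem then asserts that, over the prime ring $R$, the remaining \emph{genuinely outer} derivation-word values behave as independent indeterminate entries: the differential identity $\Phi=0$ holds in $R$ if and only if the GPI obtained by substituting a fresh indeterminate $y_{Wj}$ for each surviving word value $W(x_j)$ holds in $R$ in the variables $x_j$ and $y_{Wj}$.

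I would then apply Fact \ref{fac2} to this GPI. Because $R$, $I$, and $U$ satisfy exactly the same generalized polynomial identities with coefficients in $U$, the transformed identity holds in one of the three rings precisely when it holds in all three. Translating back through Kharchenko's theorem on each ring separately recovers the conclusion: the original differential identity is satisfied by $R$ if and only if by $I$ if and only if by $U$.

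The delicate point --- and the step I expect to require the most care --- is the compatibility of the inner/outer bookkeeping across the three rings. One must verify that a family of derivations which is outer, i.e. $C$-independent modulo inner derivations, on $R$ remains outer on $I$ and on $U$, so that Kharchenko's reduction produces the same GPI, with the same coefficients in $U$, in each case. Here primeness is essential: $R$, $I$, and $U$ share a single extended centroid $C$ and a single quotient ring $U$, so the decomposition of a derivation into its inner component (an element of $U$) and its outer residue is intrinsic and independent of which of the three rings one starts from. Once this invariance is in place, the forward and backward passages through Kharchenko's theorem are consistent, and the equivalence of differential identities among $R$, $I$, and $U$ follows.
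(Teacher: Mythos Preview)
The paper does not prove this statement at all: it records it as a known fact and cites Lee \cite{lee1992semiprime} for the proof. Your proposal is therefore not being compared against an in-paper argument but against the literature result, and what you outline is essentially the standard route by which Lee's theorem is established: extend all derivations to $U$ (this is Fact~\ref{fac4} in the paper for $R\to U$; the $I\to U$ step uses that a nonzero ideal of a prime ring is dense and shares the same Utumi quotient ring), run Kharchenko's reduction \cite{kharchenko1978diff} to convert the differential identity into a GPI with coefficients in $U$, invoke Fact~\ref{fac2} to transfer that GPI among $I$, $R$, and $U$, and then reverse the reduction. Your identification of the one genuinely delicate point is accurate: the inner/outer decomposition of the derivations must agree across the three rings, and this is exactly what the common extended centroid $C$ and common quotient ring $U$ guarantee. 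So your sketch is correct and matches the received proof; there is simply nothing in the present paper to compare it with.
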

\begin{fac}\cite{beidar1978ri}\label{fac4}
	Let $R$ be a prime ring. Then every derivation $d$ of $R$ can be uniquely
	extended to a derivation of $U$.
\end{fac}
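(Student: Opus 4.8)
The plan is to extend $d$ to $U$ by means of the relation that any derivation is forced to satisfy, and to exploit the defining feature of the Utumi quotient ring: each $q\in U$ admits a dense (essential) right ideal $I=I_q$ of $R$ with $qI\subseteq R$, together with the faithfulness property that an element of $U$ which annihilates a dense right ideal must vanish (a consequence of the primeness of $U$ and of the rationality of $U_R$ over $R_R$). First I would fix $q\in U$, choose such an $I$, and consider the map $g_q\colon I\to U$ given by $g_q(r)=d(qr)-q\,d(r)$; this makes sense because $qr\in R$, so $d(qr)\in R$, while $d(r)\in R$ and $q\,d(r)\in U$. A short computation, using only that $I$ is a right ideal and that $d$ is a derivation on $R$, shows $g_q(rs)=g_q(r)\,s$ for $r\in I$ and $s\in R$, so that $g_q$ is right $R$-linear.

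The next step is to recognise $g_q$ as left multiplication by a single element of $U$, which I would name $d(q)$. Since $U$ is its own maximal right quotient ring, $U=Q^{r}_{\max}(U)$, and since a dense right ideal of $R$ remains dense when $U$ is viewed as a right $R$-module, the right $R$-linear map $g_q$ is realised by left multiplication by a unique $d(q)\in U$; that is, $d(qr)-q\,d(r)=d(q)\,r$ for all $r\in I$. I would then verify that $d(q)$ does not depend on the chosen dense ideal: if $I'$ is another admissible ideal, the two candidate values agree after multiplication by the dense right ideal $I\cap I'$, hence coincide by faithfulness. With $d(q)$ well defined, additivity $d(q_1+q_2)=d(q_1)+d(q_2)$ and the Leibniz rule $d(q_1q_2)=d(q_1)q_2+q_1d(q_2)$ for arbitrary $q_1,q_2\in U$ follow by evaluating both sides against a common dense ideal and invoking faithfulness; linearity over $C$ is inherited in the same manner, and since $d(q)=d(q)$ reproduces the original values on $R\subseteq U$ the map is a genuine extension.

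For uniqueness I would note that if $d_1,d_2$ are derivations of $U$ both restricting to $d$ on $R$, then $\delta=d_1-d_2$ is a derivation of $U$ vanishing on $R$; for $q\in U$ and $r\in I_q$ one obtains $\delta(q)\,r=\delta(qr)-q\,\delta(r)=0$, because $qr\in R$ and $\delta|_R=0$, so $\delta(q)$ annihilates the dense ideal $I_q$ and therefore $\delta(q)=0$. The step I expect to be the main obstacle is precisely the passage turning the right $R$-linear map $g_q$ into an honest element $d(q)\in U$: its codomain is a priori only $U$ rather than $R$, so one cannot directly apply the description of $U$ as homomorphisms of dense ideals into $R$, and one must instead lean on the rationality of $U_R$ over $R_R$ and on $U=Q^{r}_{\max}(U)$ to absorb the term $q\,d(r)$. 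An alternative that sidesteps this is the functorial route, embedding $R$ into the triangular ring through $r\mapsto\left(\begin{smallmatrix} r & d(r)\\ 0 & r\end{smallmatrix}\right)$ and transporting the quotient construction, but the dense-ideal argument above is the most direct.
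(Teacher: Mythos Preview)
The paper does not actually prove this statement; it is recorded as a \emph{Fact} and attributed to \cite{beidar1978ri} without argument. There is therefore no ``paper's own proof'' to compare against.

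That said, your proposal is a correct and standard proof of the result. The dense-ideal construction of $d(q)$ via the auxiliary map $g_q(r)=d(qr)-q\,d(r)$ is exactly the classical route, and your identification of the delicate point---that $g_q$ lands in $U$ rather than in $R$, so one must invoke $U=Q^{r}_{\max}(U)$ together with the density of $I$ in $U_R$ to realise $g_q$ as left multiplication by an element of $U$---is accurate. The verifications of well-definedness, additivity, the Leibniz rule, and uniqueness are routine once this is in place, and you have outlined them correctly. The triangular-ring alternative you mention is also a well-known workaround for that step.
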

\begin{fac}\label{fac5}
	(Chuang, \cite{chuang2005identities}) Let $R$ be a prime ring, $d$ be a nonzero skew derivation on $R$, and $I$ a nonzero ideal of $R$. If $I$ satisfies the differential identity.
	\begin{equation*}
			f\big(x_1,x_2,\ldots,x_n,d(x_{1}),d(x_{2})\ldots, d({x_n})\big)=0
		\end{equation*}
	for any $x_1,\ldots,x_n\in I$, then either
	\begin{itemize}
		\item $I$ satisfies the generalized polynomial identity
		\begin{equation*}
			f(x_1,x_2\ldots,x_n,y_1,y_2\ldots, y_n)=0
			\end{equation*} for all $y_1,\ldots,y_n \in R$.\\
		or
		\item $d$ is $U$-inner.
		\begin{equation*}
			f\big(x_1,x_2,\ldots,x_n,[p,x_1],[p,x_2]\ldots, [p,x_n]\big)=0
		\end{equation*}
	\end{itemize}
\end{fac}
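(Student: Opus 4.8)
The plan is to follow Kharchenko's philosophy for differential identities, adapted to the skew setting as in Chuang's framework, organizing everything around a single dichotomy for the skew derivation $d$. First I would pass from the ideal $I$ to the Utumi quotient ring: by Fact~\ref{fac3} the ideal $I$, the ring $R$, and $U$ satisfy the same differential identities, so it suffices to prove the conclusion with $I$ replaced by $U$. Next I would record that a skew derivation $d$ of $R$ associated with $\alpha$ extends uniquely to a skew derivation of $U$ (the skew analogue of Fact~\ref{fac4}), and I would fix the fundamental alternative for skew derivations: either $d$ is $U$-inner, i.e. there is $b \in U$ with $d(x) = bx - \alpha(x)b$ for all $x$, or $d$ is $U$-outer. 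The entire statement is then a clean separation of these two alternatives.

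The substantive half is the outer case, where I must show that the differential identity $f(x_1,\ldots,x_n,d(x_1),\ldots,d(x_n)) = 0$ forces the \emph{generic} generalized polynomial identity $f(x_1,\ldots,x_n,y_1,\ldots,y_n) = 0$ for all $y_i \in U$. The mechanism is a freeness argument carried out inside the Ore (skew-polynomial) extension $U[t;\alpha,d]$, in which by construction $tx = \alpha(x)t + d(x)$, so that $d(x)$ is recovered as the degree-zero part of $tx$. When $d$ is $U$-outer the indeterminate $t$ is transcendental over $U$ in the strong sense that $U$ together with the action of $d$ imposes no nontrivial relation, and consequently the pairs $(x_i, d(x_i))$ behave as $2n$ independent generalized variables over $U$. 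Concretely I would show that the assignment $x_i \mapsto r_i$, $d(x_i) \mapsto s_i$ extends to a well-defined $C$-algebra homomorphism for arbitrary $r_i, s_i \in U$; evaluating the hypothesis along this substitution yields $f(r_1,\ldots,r_n,s_1,\ldots,s_n) = 0$, and by Fact~\ref{fac2} this generalized polynomial identity then also holds over $R$.

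In the complementary inner case I would substitute $d(x) = bx - \alpha(x)b$ directly into the hypothesis. This converts the differential identity into an ordinary generalized polynomial identity in the variables $x_1,\ldots,x_n$ (with the automorphism $\alpha$ built in), which is precisely the second alternative: each $d(x_i)$ has been replaced by its inner expression, and in the special case $\alpha = I_R$ the inner skew derivation collapses to the inner derivation $[b,x]$, giving exactly the commutator form $[p,x_i]$ displayed in the statement. Once the inner representative $b$ has been produced, no further structural analysis is required.

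The main obstacle is the freeness/independence step of the outer case, which is the technical core of Chuang's theorem and is not a routine computation. Making ``$t$ is transcendental over $U$'' precise requires the classification of $U$-inner versus $U$-outer skew derivations together with a careful case split according to whether the companion automorphism $\alpha$ is $X$-inner (Frobenius) or $X$-outer; each case demands a separate linear-independence argument over the extended centroid $C$ to rule out a hidden relation collapsing the $2n$ substituted variables. Controlling this interaction between the outerness of $d$ and the nature of $\alpha$ — and thereby certifying that the substitution homomorphism of the second paragraph is genuinely well defined — is where essentially all of the difficulty of the statement resides.
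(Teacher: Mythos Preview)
The paper does not give a proof of this statement at all: it is listed as a \emph{Fact} in the preliminaries section and attributed directly to Chuang \cite{chuang2005identities}. So there is no ``paper's own proof'' to compare your proposal against; the authors simply invoke this result as a black box wherever they need to replace $d(x_i)$ by fresh variables $y_i$ in the outer case.

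As for your sketch itself, the overall shape is the right one and matches the standard Kharchenko--Chuang strategy: reduce from $I$ to $U$, split on whether $d$ is $U$-inner or $U$-outer, and in the outer case argue that the $d(x_i)$ behave as independent indeterminates via the skew polynomial ring $U[t;\alpha,d]$. Your honest identification of the obstacle is accurate: the hard content is precisely the freeness statement in the outer case, and that requires the full machinery of Chuang's paper (handling the interaction between the outerness of $d$ and the $X$-inner/$X$-outer status of $\alpha$, with separate linear-independence arguments over $C$). What you have written is a correct high-level roadmap rather than a proof; filling in the freeness step is essentially reproving Chuang's theorem, which is exactly why the present paper cites it instead.
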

\begin{fac}\cite{de2012}\label{fac9}
	Let $K$ be an infinite field and $m\geq 2$ an integer. If $P_1,\ldots,P_k$ are non-scalar matrices in $M_m(K)$ then there exists some invertible matrix $P \in M_m(K)$ such that each matrix $PP_1P^{-1},\ldots,PP_kP^{-1}$ has all non-zero entries.
	\end{fac}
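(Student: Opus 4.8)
The statement is a density result in disguise, so the plan is to reduce it to the non-vanishing of a single polynomial. First I would fix one of the matrices, say $A := P_\ell$ (non-scalar), and one position $(i,j)$, and study the scalar function $P \mapsto (PAP^{-1})_{ij}$ on $\mathrm{GL}_m(K)$. Writing $P^{-1} = \mathrm{adj}(P)/\det P$, the numerator $g^{(\ell)}_{ij}(P) := (P\,A\,\mathrm{adj}(P))_{ij}$ is a polynomial in the $m^2$ entries of $P$, and $(PAP^{-1})_{ij} = g^{(\ell)}_{ij}(P)/\det P$ for every invertible $P$. So the whole problem reduces to showing that each of the finitely many polynomials $g^{(\ell)}_{ij}$ is not the zero polynomial, and then combining.

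The crux is this single-entry non-vanishing. To see $g^{(\ell)}_{ij}\neq 0$ it suffices to exhibit one matrix in the conjugacy class of $A$ with nonzero $(i,j)$ entry. Using a basis $\{v_1,\dots,v_m\}$ with dual basis $\{v_r^*\}$, the $(i,j)$ entry of $Q^{-1}AQ$ equals $\phi(Aw)$ with $w=v_j$ and $\phi=v_i^*$, so that $\phi(w)=\delta_{ij}$; conversely any pair $w\neq 0,\ \phi\neq 0$ with $\phi(w)=\delta_{ij}$ completes to such a basis/dual pair (routine: split $V=Kw\oplus\ker\phi$ and distribute indices so that $v_j=w$ and $v_i^*=\phi$). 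Hence I must produce $w,\phi$ with $\phi(w)=\delta_{ij}$ and $\phi(Aw)\neq 0$. Suppose none exists. In the diagonal case $i=j$ this means $\phi(Aw)=0$ whenever $\phi(w)=1$; replacing $\phi$ by $\phi+\chi$ with $\chi(w)=0$ forces $\chi(Aw)=0$ for every $\chi$ annihilating $w$, whence $Aw\in Kw$. In the off-diagonal case $i\neq j$ the assumption $\phi(Aw)=0$ whenever $\phi(w)=0$ gives $Aw\in Kw$ directly. Either way $Aw\in Kw$ for all $w$, i.e.\ every vector is an eigenvector of $A$; for $m\geq 2$ this forces $A$ to be scalar, contradicting non-scalarity. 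This is precisely where the non-scalar hypothesis is used, and I expect it to be the only genuinely nontrivial step.

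Finally I would assemble the single-entry results. Since each $g^{(\ell)}_{ij}$ ($1\leq \ell\leq k$, $1\leq i,j\leq m$) is nonzero, so is the finite product $\Phi(P):=\det(P)\prod_{\ell,i,j} g^{(\ell)}_{ij}(P)$. Because $K$ is infinite, a nonzero polynomial in $m^2$ variables cannot vanish on all of $K^{m^2}$ (standard, by induction on the number of variables, using that a nonzero one-variable polynomial over an infinite field has only finitely many roots). Choosing $P$ with $\Phi(P)\neq 0$ yields $\det P\neq 0$, so $P$ is invertible, together with $g^{(\ell)}_{ij}(P)\neq 0$ for all $\ell,i,j$; hence $(PP_\ell P^{-1})_{ij}=g^{(\ell)}_{ij}(P)/\det P\neq 0$ for every $\ell,i,j$, so this single $P$ conjugates each $P_\ell$ to a matrix with all entries nonzero.

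To summarize the structure: the reductions in the first and third paragraphs are formal, and the infiniteness of $K$ enters only in the final combination. The main obstacle is the second paragraph, the non-vanishing of one conjugate entry, which rests entirely on the elementary but essential fact that a matrix all of whose vectors are eigenvectors must be scalar; I would make sure to record that this is the unique place where non-scalarity of the $P_\ell$ is invoked.
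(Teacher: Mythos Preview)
The paper does not supply a proof of this statement: it is recorded as a cited Fact from \cite{de2012}, so there is no in-paper argument to compare against. Your proposal is correct and is in fact the standard Zariski-density argument for this result: the reduction of each entry $(PP_\ell P^{-1})_{ij}$ to a ratio of polynomials via the adjugate, the single-entry non-vanishing step (any non-scalar matrix has a conjugate with nonzero $(i,j)$ entry, because a linear operator for which every vector is an eigenvector must be scalar), and the final combination using that a nonzero polynomial in $m^2$ variables over an infinite field has a non-root, are all sound. One small expository point: your parenthetical ``split $V=Kw\oplus\ker\phi$'' only fits the diagonal case $i=j$; when $i\neq j$ you have $\phi(w)=0$, so $w\in\ker\phi$ and the decomposition fails as stated. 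The fix is immediate---choose $v_i$ with $\phi(v_i)=1$ and fill the remaining $m-1$ slots (including position $j$) with a basis of $\ker\phi$ that contains $w$---and your conclusion is unaffected.
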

\begin{fac} \cite{argacc2011actions}\label{fac 8}
Let $R$ be a non-commutative prime ring of characteristic not equal to $2$ with Utumi quotient ring $U$ and extended centroid $C$ and let $f(x_1,\ldots,x_n)$ be a multilinear polynomial over $C$ which is not central valued on $R$. Suppose that there exists $a,b,c\in U$ such that $f(r)af(r)+f(r)^2b-cf(r)^2=0$ for all $r_1,\ldots, r_n\in R.$ Then one of the following holds:
\begin{enumerate}
   \item $b,c\in C$, $c-b=a=\alpha \in C,$
\item $f(x_1,\ldots,x_n)^2$ is central valued and there exists $\alpha \in C$ such that $c-b=a=\alpha$.
\end{enumerate}
\end{fac}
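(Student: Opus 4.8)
The plan is to read the hypothesis as a generalized polynomial identity (GPI) satisfied by $R$, namely $\Phi(x_1,\dots,x_n):=f\,a\,f+f^2b-c\,f^2=0$ with $f=f(x_1,\dots,x_n)$, and to reduce the problem to a matrix computation. If $\Phi$ happens to be a trivial GPI, then (using that a non-central $f$ is not itself a GPI) a direct inspection of the coefficient relations settles that degenerate case, so I may assume $\Phi$ is nontrivial. Then $R$ satisfies a nontrivial GPI, and by Martindale's theorem on prime GPI rings together with Fact~\ref{fac2}, the central closure $U$ is a primitive ring with nonzero socle whose associated division ring is finite-dimensional over $C$. Extending scalars to the algebraic closure $\overline C$ and invoking a standard Litoff/density argument to pass from the dense action to finite matrix blocks, I reduce to the case $U=M_k(\overline C)$ with $k\ge 2$ (non-commutativity of $R$ forces $k\ge2$), where $f$ remains not central-valued.

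Working in $M_k(\overline C)$, let $S=\{f(r):r\in M_k(\overline C)^n\}$ be the value set. Since $f$ is multilinear, $S$ is invariant under conjugation by $\mathrm{GL}_k$ and closed under scalar multiples. The first substantive step is to force $a$ into the center. I use the standard value-attainment fact for a non-central multilinear polynomial: $S$ contains a matrix unit $e_{ij}$ with $i\neq j$, hence, by conjugation-invariance, every rank-one nilpotent $N$. For such $N$ one has $N^2=0$, so the identity collapses to $NaN=0$. Writing $N=vw^{t}$ with $w^{t}v=0$ gives $NaN=(w^{t}av)N$, whence $w^{t}av=0$ whenever $w^{t}v=0$; this says $av\in \overline C\,v$ for every vector $v$, so every vector is an eigenvector of $a$, forcing $a=\alpha I$ with $\alpha\in C$.

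With $a=\alpha$ central the identity becomes $X^2(\alpha+b)=cX^2$ for all $X\in S$; set $b_1=\alpha+b$. I then split on whether $f^2$ is central-valued. If $f^2$ is \emph{not} central-valued, then by Chuang's theorem on the additive subgroup generated by the values of the polynomial $f^2$, that subgroup contains the trace-zero matrices $\mathrm{sl}_k(\overline C)$; since the $\overline C$-linear map $Z\mapsto Zb_1-cZ$ vanishes on every square $X^2$, it vanishes on all of $\mathrm{sl}_k(\overline C)$, and evaluating it on $e_{ij}$ ($i\neq j$) and on $e_{11}-e_{22}$ forces $b_1$ and $c$ to be equal scalars $b_1=c=\lambda\in C$; hence $b,c\in C$ and $c-b=\alpha=a$, which is conclusion (1). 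If $f^2$ is central-valued and not identically zero, I pick $X$ with $0\neq X^2=\mu\in C$; then $\mu(\alpha+b-c)=0$ gives $c-b=\alpha=a$ with $f^2$ central, which is conclusion (2). The residual possibility $f^2\equiv0$ is excluded, since a non-central multilinear polynomial cannot be nil (Leron).

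I expect two points to carry the real weight. First, the reduction to $M_k(\overline C)$ in the infinite-dimensional primitive case must be justified carefully by a density/Litoff argument, so that a finite system of equations solved inside one matrix block genuinely lifts back to $U$. Second, and more crucially, the argument hinges on the value-attainment lemma that a non-central multilinear $f$ actually \emph{takes} a matrix-unit value $e_{ij}$, not merely that its values span $\mathrm{sl}_k$ or $M_k$; this is exactly what makes the clean ``$N^2=0\Rightarrow NaN=0$'' reduction available and thereby pins $a$ to $C$. The remaining entry-by-entry passage from ``$Z\mapsto Zb_1-cZ$ annihilates $\mathrm{sl}_k$'' to ``$b_1=c\in C$'' is routine bookkeeping, the only care being the $k=2$ case, where the diagonal generator $e_{11}-e_{22}$ must be used in addition to the off-diagonal units.
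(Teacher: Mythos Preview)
The paper does not prove this statement at all: it is recorded as Fact~\ref{fac 8} with a bare citation to Arga\c{c}--De Filippis and is then used as a black box, so there is no ``paper's own proof'' to compare against.

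As for the soundness of your outline, it is the standard route for results of this type and is essentially correct. The reduction via Martindale and Litoff to $M_k(\overline C)$ is exactly the template the paper itself uses elsewhere (e.g.\ in Lemma~\ref{lem2}). Your key step---using that a non-central multilinear polynomial over an algebraically closed field attains a matrix unit $e_{ij}$, hence every rank-one nilpotent, so that $N^2=0$ collapses the identity to $NaN=0$ and forces $a$ scalar---is valid; the eigenvector argument you give is clean. After $a=\alpha\in C$, the dichotomy on whether $f^2$ is central-valued is handled correctly: in the non-central case Chuang's additive-subgroup theorem gives $\mathrm{sl}_k$ inside the span of the squares, and your entry computation on $e_{ij}$ together with $e_{11}-e_{22}$ for $k=2$ pins $b_1=c\in C$; in the central case you need only one value with nonzero square, and Leron's theorem (a non-PI multilinear polynomial is not nil) excludes $f^2\equiv 0$. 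The two caveats you flag---care in the Litoff lift for the infinite-dimensional socle case, and the need for an \emph{actual} matrix-unit value rather than merely spanning---are precisely the places where a full write-up must cite the relevant lemmas, but neither is a gap.
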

\begin{fac}\cite{de2018}\label{fac 9}
If $d$ is a non-zero skew derivation on a prime ring $R$ then associated automorphism $\alpha$ is unique.
\end{fac}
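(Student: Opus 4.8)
The plan is to prove uniqueness by a direct substitution argument driven only by primeness. Suppose, contrary to uniqueness, that the nonzero skew derivation $d$ is associated with two automorphisms $\alpha$ and $\beta$, so that both
\[ d(xy) = d(x)y + \alpha(x)d(y) \qquad\text{and}\qquad d(xy) = d(x)y + \beta(x)d(y) \]
hold for all $x,y \in R$. Subtracting the two identities cancels the common term $d(x)y$ and yields
\[ \big(\alpha(x) - \beta(x)\big)\, d(y) = 0 \qquad \text{for all } x,y \in R. \]
Writing $\gamma(x) := \alpha(x) - \beta(x)$ for the (additive) difference map, the whole problem reduces to showing $\gamma \equiv 0$, i.e. $\alpha = \beta$. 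Note that this one-sided vanishing cannot by itself force $\gamma(x)=0$, since in a prime ring $ab=0$ does not imply $a=0$ or $b=0$; an interpolating element of $R$ must be produced first.

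To that end I would boost the relation into a two-sided, primeness-ready form. Fixing $x$ and replacing $y$ by $ry$ for an arbitrary $r \in R$, the skew-Leibniz rule for $\alpha$ gives $d(ry) = d(r)y + \alpha(r)d(y)$, so
\[ 0 = \gamma(x)\,d(ry) = \gamma(x)\,d(r)\,y + \gamma(x)\,\alpha(r)\,d(y). \]
The first summand already vanishes, because $\gamma(x)\,d(r) = 0$ is the basic identity with $y$ replaced by $r$. Hence $\gamma(x)\,\alpha(r)\,d(y) = 0$ for all $r$, and since $\alpha$ is an automorphism of $R$ it is surjective, so $\alpha(r)$ runs over all of $R$ and we obtain
\[ \gamma(x)\, R\, d(y) = 0 \qquad \text{for all } x,y \in R. \]

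Finally I would invoke primeness to conclude. Because $d \neq 0$, choose $y_0$ with $d(y_0) \neq 0$; then $\gamma(x)\,R\,d(y_0) = 0$ for every $x$, and primeness forces $\gamma(x) = 0$ for every $x$, that is $\alpha = \beta$. The only point requiring care — and the step I expect to be the main, though mild, obstacle — is that in the $\mathrm{b}$-generalized setting $d$ takes its values in $U$ rather than in $R$, so $d(y_0) \in U$ while $\gamma(x) \in R$. Here I would use that $U$ is itself prime (as recorded in the Introduction) together with the density of $R$ in $U$, which together guarantee that $\gamma(x)\,R\,d(y_0) = 0$ with $d(y_0) \neq 0$ still forces $\gamma(x) = 0$. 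With this observation the argument carries over verbatim, and the uniqueness of the associated automorphism follows.
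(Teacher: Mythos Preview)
Your argument is correct. The paper, however, does not supply a proof of this statement at all: it records it as a \emph{Fact} with a citation to \cite{de2018} and moves on. So there is nothing to compare strategies against---you have in effect filled in a proof that the paper outsourced.

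Your route is the standard and cleanest one: subtract the two Leibniz rules to get $(\alpha(x)-\beta(x))d(y)=0$, bootstrap via $y\mapsto ry$ and the already-known vanishing $(\alpha(x)-\beta(x))d(r)=0$ to insert an arbitrary $\alpha(r)$ in the middle, then use surjectivity of $\alpha$ and primeness. The only delicate point you flagged---that $d$ may take values in $U$---is genuine but harmless: the simplest way to dispatch it is to note (as the paper implicitly does elsewhere, cf.\ Facts~\ref{fac2}--\ref{fac4}) that $d$, $\alpha$, $\beta$ all extend to $U$, so the entire computation can be carried out in $U$ from the start, giving $(\alpha(x)-\beta(x))\,U\,d(y)=0$ and allowing primeness of $U$ to be applied directly. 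Alternatively one can quote the standard fact that $aRb=0$ with $a,b\in U$ already forces $a=0$ or $b=0$; either way your conclusion stands.
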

\begin{fac} \cite{de2020}\label{x}
    Let $R $ be a prime ring and $\phi,\gamma$ be two automorphisms of $U$ and $d,g$ be two skew derivations on $R$ associated with the same automorphism $\phi$. If there exists a non-zero central element $\nu$ and $v\in U$ such that 
    \begin{align*}
        g(x)=(vx-\gamma(x)v)+\nu d(x),\ \text{for all}\ x\in R
     \end{align*}
Then $g(x)=\nu d(x)$ and one of the following holds:
\begin{enumerate}
    \item $\phi=\gamma$ 
    \item $v=0$.
\end{enumerate}
\end{fac}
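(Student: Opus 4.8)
The plan is to reduce everything to a single skew derivation that carries two associated automorphisms. Set $\delta := g - \nu d$. Since $\nu\in C$ is central, $\nu d$ is again a skew derivation associated with $\phi$ (one checks $(\nu d)(xy)=(\nu d)(x)y+\phi(x)(\nu d)(y)$, using that $\nu$ commutes with $\phi(x)$), and hence the difference $\delta=g-\nu d$ of two $\phi$-skew derivations is itself a skew derivation associated with $\phi$. On the other hand, the hypothesis reads exactly $\delta(x)=vx-\gamma(x)v$ for all $x\in R$, which is the inner skew derivation associated with $\gamma$. Thus $\delta$ is \emph{simultaneously} a skew derivation associated with $\phi$ and with $\gamma$, and proving the statement amounts to showing $\delta=0$ (equivalently $g=\nu d$) together with the dichotomy $\phi=\gamma$ or $v=0$.

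Next I would extract the governing identity. Expanding $\delta(xy)$ once by the $\phi$-rule, $\delta(xy)=\delta(x)y+\phi(x)\delta(y)$, and once by the $\gamma$-rule, $\delta(xy)=\delta(x)y+\gamma(x)\delta(y)$, and subtracting gives
\[(\phi(x)-\gamma(x))\,\delta(y)=0,\qquad\text{i.e.}\qquad (\phi(x)-\gamma(x))\bigl(vy-\gamma(y)v\bigr)=0,\]
for all $x,y\in R$. This is the key generalized polynomial identity with automorphisms that drives the conclusion.

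From here the argument splits. If $v=0$ then $\delta=0$ outright, so $g=\nu d$ and alternative (2) holds. Otherwise, suppose $\delta\neq 0$ and pick $y_0$ with $b:=\delta(y_0)\neq 0$; then $\phi(x)b=\gamma(x)b$ for all $x$. Substituting $x\mapsto zx$ and using $\phi(x)b=\gamma(x)b$ to rewrite the left factor yields $(\phi(z)-\gamma(z))\gamma(x)b=0$, and since $\gamma$ is onto this reads $(\phi(z)-\gamma(z))\,R\,b=0$ for all $z$; primeness together with $b\neq 0$ forces $\phi(z)=\gamma(z)$ for all $z$, i.e. $\phi=\gamma$, which is alternative (1). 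A cleaner route to the same point is to invoke Fact \ref{fac 9}: a nonzero skew derivation has a unique associated automorphism, so a nonzero $\delta$ carrying both $\phi$ and $\gamma$ forces $\phi=\gamma$ immediately. In the complementary case $\delta=0$ we read off $g=\nu d$ directly. Collecting the cases yields $g=\nu d$ together with alternative (1) or (2).

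The main obstacle I expect is the passage from the pointwise relation $(\phi(x)-\gamma(x))\delta(y)=0$ to a genuine two-sided annihilation $(\phi(x)-\gamma(x))\,R\,\delta(y)=0$ to which primeness can be applied; this is where the surjectivity of $\gamma$ (equivalently $\gamma(R)=R$) and the multiplicative substitution are essential, and it is precisely this step that the uniqueness statement Fact \ref{fac 9} lets us bypass. A secondary point requiring care is the bookkeeping that $\nu d$ remains a $\phi$-skew derivation (centrality of $\nu$) and that $x\mapsto vx-\gamma(x)v$ is genuinely $\gamma$-skew, since the whole argument rests on $\delta$ legitimately possessing the two associated automorphisms $\phi$ and $\gamma$.
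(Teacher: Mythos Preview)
This Fact is quoted from \cite{de2020} and the present paper gives no proof of it, so there is no in-paper argument to compare against. Your strategy---set $\delta=g-\nu d$, observe that $\delta$ is simultaneously a $\phi$-skew derivation (as a $C$-linear combination of $g$ and $d$) and a $\gamma$-skew derivation (since $\delta(x)=vx-\gamma(x)v$ by hypothesis), and then invoke the uniqueness of the associated automorphism (Fact~\ref{fac 9})---is exactly the right idea, and the identity $(\phi(x)-\gamma(x))\delta(y)=0$ you derive is correct.

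The gap is in the bookkeeping of cases. Your split ``$v=0$'' versus ``$\delta\neq 0$'' is not exhaustive (it misses $v\neq 0$ with $\delta=0$), and in the branch $\delta\neq 0\Rightarrow\phi=\gamma$ you never recover $g=\nu d$. In fact you cannot: take $\phi=\gamma$, choose any noncentral $v\in U$ and any $\phi$-skew derivation $d$, and put $g(x):=\nu d(x)+vx-\phi(x)v$. All hypotheses of the Fact hold and alternative~(1) is satisfied, yet $g\neq\nu d$. So the clause ``$g(x)=\nu d(x)$'' cannot hold unconditionally; the defect lies in the statement as transcribed here, not in your method.

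What your argument does establish cleanly is the dichotomy: either $\phi=\gamma$, or $\delta=0$ (the latter giving $g=\nu d$ together with $vx=\gamma(x)v$ for all $x$). This is precisely how the paper actually uses the Fact downstream: in Section~4 it invokes it only to conclude ``either $\phi=\alpha$ or $a_2=0$'' and, in the first alternative, keeps the inner term $a_2x-\alpha(x)a_2$ rather than dropping it. To upgrade ``$\delta=0$'' to the literal ``$v=0$'' one needs an extra ingredient (for instance, if $\gamma$ is outer then $vx=\gamma(x)v$ together with Chuang's theorem on GPIs with automorphisms forces $v=0$).
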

\begin{fac}  \cite{de2020}\label{1}
 Let $R $ be a prime ring and $\phi,\gamma$ be two automorphisms of $U$ and $d,g$ be two skew derivations on $R$ associated with the same automorphism $\phi$. If there exists a non-zero central element $\nu$ and $v\in U$ such that 
    \begin{align*}
        g(x)=(vx-\gamma(x)v)+\nu d(x),\ \text{for all}\ x\in R
     \end{align*}
If $d$ is inner skew derivation then so $g$.
\end{fac}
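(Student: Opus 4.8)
The plan is to deduce this statement as an essentially immediate consequence of Fact~\ref{x}, which under exactly the same hypotheses on $R$, $\phi$, $\gamma$, $d$, $g$, $\nu$ and $v$ already delivers the sharper structural identity $g(x) = \nu\, d(x)$ for all $x \in R$. So the first step is simply to invoke Fact~\ref{x}: given that $\nu$ is a non-zero central element and that $g(x) = (vx - \gamma(x)v) + \nu d(x)$, we conclude that the inner part $vx - \gamma(x)v$ vanishes identically as a map and that $g = \nu d$. This reduces the whole statement to a purely formal observation about central multiples.

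With $g = \nu d$ in hand, the remaining task is to show that scaling an inner skew derivation by a central element preserves innerness. Assuming $d$ is an inner skew derivation associated with the automorphism $\phi$, by definition there is a fixed $c \in U$ with $d(x) = cx - \phi(x)c$ for all $x \in R$. Substituting gives $g(x) = \nu d(x) = \nu c\, x - \nu \phi(x) c$ for all $x \in R$.

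The only point requiring care is the placement of the scalar $\nu$. Because $\nu$ lies in the extended centroid $C$, it commutes with every element of $U$, in particular with $\phi(x)$; hence $\nu \phi(x) c = \phi(x)(\nu c)$. Setting $c' := \nu c \in U$, we obtain $g(x) = c'x - \phi(x)c'$ for all $x \in R$, which is precisely the form of an inner skew derivation associated with the automorphism $\phi$. Therefore $g$ is inner, as claimed.

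I expect no genuine obstacle here, since all the analytic content is already carried by Fact~\ref{x}; what remains is the formal fact that a central multiple of an inner skew derivation is again inner, valid precisely because centrality of $\nu$ lets it pass through $\phi(x)$. The single thing worth double-checking is that the inner skew derivation defining $d$ (the map $x \mapsto cx - \phi(x)c$) is associated with the same automorphism $\phi$ that both $d$ and $g$ carry, so that the witness $c'$ genuinely certifies innerness of $g$ relative to that automorphism.
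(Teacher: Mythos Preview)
Your argument is correct. The paper does not supply its own proof of this statement: Fact~\ref{1} is simply quoted from \cite{de2020} without argument, so there is nothing to compare against on the paper's side. Your route---invoke Fact~\ref{x} to get $g=\nu d$, then observe that if $d(x)=cx-\phi(x)c$ then $g(x)=(\nu c)x-\phi(x)(\nu c)$ because $\nu\in C$ commutes with $\phi(x)$---is a clean and valid derivation of Fact~\ref{1} from Fact~\ref{x}. The only caveat is external: in the source \cite{de2020} these two facts may well be proved together (or in the opposite logical order), so your reduction is sound \emph{within this paper's axiomatic setup} where both are taken as independent cited facts, but it may not reflect how the original reference establishes them.
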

The standard polynomial identity $s_4$ in four variables is defined as
$$s_4(x_1, x_2, x_3, x_4) = \sum\limits_{\sigma\in Sym(4)}(-1)^{\sigma}x_{\sigma(1)}x_{\sigma(2)}x_{\sigma(3)}x_{\sigma(4)},$$ where $(-1)^\sigma$ is either $+1$ or $-1$ according to $\sigma$ being an even or odd permutation in the symmetric group $Sym(4)$. Note that in this article ${R}$ always denotes a non-trivial and associative prime ring (unless otherwise stated) and we use the abbreviation GPI for generalized polynomial identity. 

\section{$F$ and $G$ are inner $b$-generalized skew derivations}
In this section, we investigate the case where $F$ and $G$ are inner $b$-generalized skew derivations of $R$ with the associated term $(b, \alpha)$. Specifically, we consider $F(x) = ax + b\alpha(x)u$ and $G(x) = cx + b\alpha(x)v$ for all $x \in R$, where $a, b, c, u, v \in U$.

To establish our main result, we present the following Lemmas.
\begin{lem}\label{prop1A}
	Let $R$ be a prime ring of $char(R) \neq 2$ and $ a_1$, $a_2$, $a_3$, $a_4$ and $a_5 \in R$ such that 
	\begin{equation}\label{eqA12}
		a_1u^2 +a_2u^2a_3 +a_4u^2a_5=0,\  \forall\ u =[r_1,r_2]\in [R,R].
	\end{equation}
 Then one of the following holds:
	\begin{enumerate}
	\item $R$ satisfies $s_4$.
	\item $a_3,a_4 \in C$ and $a_1 +a_2a_3 = -a_5a_4 \in C$.
	\item $a_3,a_5 \in C$ and $a_1+a_2a_3+a_5a_4 =0$.
	\item $a_1, a_2,a_4 \in C$ and $a_1+a_2a_3+a_5a_4 =0$.
	\item $a_2, a_5 \in C$ and $a_1 +a_4a_5 =-a_2a_3 \in C$.
	\item there exists $\lambda$, $\eta$, $\mu \in C$ s.t. $a_5 +\eta a_3=\lambda$, $a_2 -\eta a_4= \mu$ and $a_1 +\lambda a_4= -\mu a_3 \in C$
	\end{enumerate}
\end{lem}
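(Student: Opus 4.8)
The plan is to treat the displayed relation as a generalized polynomial identity (GPI) in the two variables $r_1,r_2$. By Fact~\ref{fac2} we may pass to the central closure and assume $R$ is centrally closed over $C$, so that $R$ satisfies
\[
\Phi(r_1,r_2):=a_1[r_1,r_2]^2+a_2[r_1,r_2]^2a_3+a_4[r_1,r_2]^2a_5=0 .
\]
Since $[r_1,r_2]^2=r_1r_2r_1r_2-r_1r_2^2r_1-r_2r_1^2r_2+r_2r_1r_2r_1$, inside the free product $R*_CC\langle r_1,r_2\rangle$ the coefficient of each of these four words in $\Phi$ is the single element $a_1\otimes1+a_2\otimes a_3+a_4\otimes a_5$ of $R\otimes_CR$; hence $\Phi$ is the trivial GPI exactly when
\begin{equation}\label{tensoreq}
a_1\otimes1+a_2\otimes a_3+a_4\otimes a_5=0\quad\text{in}\quad R\otimes_CR .
\end{equation}

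The first step is to show that \eqref{tensoreq} must hold unless $R$ satisfies $s_4$. If $R$ satisfies no non-trivial GPI then $\Phi$ is the trivial GPI and we are done. Otherwise, by Martindale's theorem $R$ is a primitive ring with nonzero socle $H$, the division ring $D:=eHe$ (for a minimal idempotent $e$) is finite-dimensional over $C$, and $R$ acts densely on a right $D$-vector space $V$. If $R$ satisfies $s_4$ we are in conclusion $(1)$; otherwise, extending the centroid to its algebraic closure and passing to a finite-rank corner, we may assume $R$ is a dense ring of linear transformations of a vector space of dimension at least $3$ over a field $F$ with $\mathrm{char}(F)\neq2$ — in particular $M_m(F)$ with $m\ge3$, or such a matrix corner $eHe\cong M_m(F)$. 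There every commutator $u=[r_1,r_2]$ with $\operatorname{tr}u=0$ occurs, and the $F$-span of $\{u^2\}$ is all of $M_m(F)$: for distinct $i,j,k$ one has $(e_{ij}+e_{jk})^2=e_{ik}$ (and $e_{ij}+e_{jk}$ is a commutator, as every trace-zero matrix over a field is), while $(e_{ii}-e_{jj})^2=e_{ii}+e_{jj}$ with $e_{ii}-e_{jj}=[e_{ij},e_{ji}]$, and together with $\mathrm{char}(F)\neq2$ these exhaust a basis of $M_m(F)$. Hence $a_1w+a_2wa_3+a_4wa_5=0$ for every $w\in M_m(F)$, which (by faithful flatness of the scalar extension) forces \eqref{tensoreq}.

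It remains to deduce $(2)$--$(6)$ from \eqref{tensoreq}; this is linear algebra over the field $C$, organized by $\delta:=\dim_C\mathrm{span}_C\{1,a_3,a_5\}$. If $\delta=3$, then $a_1=a_2=a_4=0$ and $(4)$ holds. If $\delta=1$, then $a_3,a_5\in C$ and \eqref{tensoreq} reads $a_1+a_2a_3+a_5a_4=0$, which is $(3)$. If $\delta=2$: when $a_3\in C$, \eqref{tensoreq} becomes $(a_1+a_2a_3)\otimes1+a_4\otimes a_5=0$, so $a_4=0$ and $a_1+a_2a_3=0$, which is $(2)$; when $a_5\in C$, symmetrically $a_2=0$ and $a_1+a_4a_5=0$, which is $(5)$; and when neither lies in $C$, writing $a_5=\lambda+\eta a_3$ with $\lambda,\eta\in C$, $\eta\ne0$, and splitting \eqref{tensoreq} along the basis $\{1,a_3\}$ of the span gives $a_1+\lambda a_4=0$ and $a_2+\eta a_4=0$, which is $(6)$ (with parameters $\lambda$, $-\eta$ and $\mu=0$). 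This exhausts all possibilities.

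The crux is the middle paragraph: showing that a non-trivial GPI of this shape can occur only when $R$ satisfies $s_4$, i.e.\ that failing $s_4$ forces the clean tensor relation \eqref{tensoreq}. This rests on Martindale's structure theory for prime GPI rings together with the elementary but slightly delicate computation that squares of commutators span $M_m(F)$ for $m\ge3$ (and the reduction to that matrix setting); once \eqref{tensoreq} is available, the remaining case analysis is routine.
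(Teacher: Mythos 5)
Your route is genuinely different from the paper's: the paper quotes Chuang's theorem on the additive subgroup generated by the values of $u^2$ (so that, since $char(R)\neq 2$ and $u^2$ is not central valued, this subgroup contains $[I,R]$ for a noncentral ideal $I$ by Bergen--Herstein--Kerr), passes to all of $R$ via Fact \ref{fac2}, and then simply cites Proposition 2.13 of De Filippis--Prajapati--Tiwari for the list (2)--(6); you instead re-prove that ingredient from scratch via the free-product triviality criterion, the observation that squares of commutators span $M_m(F)$ for $m\geq 3$ and $\mathrm{char}(F)\neq 2$, and a tensor computation. Much of this is sound: all four monomials of $[r_1,r_2]^2$ carry the same coefficient tensor, so triviality of your GPI is indeed equivalent to $a_1\otimes 1+a_2\otimes a_3+a_4\otimes a_5=0$ in $R\otimes_C R$; the spanning computation is correct; and your case analysis on $\dim_C\mathrm{span}_C\{1,a_3,a_5\}$ produces (strengthened forms of) conclusions (2)--(6).

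The genuine gap is in the infinite-dimensional branch of the Martindale case. After passing to a finite-rank corner $eHe\cong M_m(F)$ you conclude $a_1w+a_2wa_3+a_4wa_5=0$ for all $w$ in that corner and assert that this forces the tensor relation ``by faithful flatness of the scalar extension.'' Faithful flatness only handles the extension $C\subseteq\bar C$; it cannot promote vanishing on a single corner to a relation in $R\otimes_C R$, and that implication is in fact false: take $a_2=a_4=0$ and $a_1=f$ a nonzero finite-rank idempotent with $fe=0$; then $a_1w=f(ewe)=0$ for every $w\in eHe$, yet $a_1\otimes 1\neq 0$. (Relatedly, the coefficients $a_1,\dots,a_5$ need not lie in the corner, so ``we may assume $R$ is $eHe$'' is not legitimate as stated.) The step is repairable: either let $e$ range over all finite-rank idempotents, so the identity holds for all $w$ in the socle, a nonzero two-sided ideal, and then invoke the standard Martindale-type lemma that $\sum_i a_ixb_i=0$ on a nonzero ideal of a prime ring forces $\sum_i a_i\otimes b_i=0$ in $U\otimes_C U$; or choose $e$ Litoff-style large enough to witness any failure of the relation, which is exactly the care taken in the paper's proof of Lemma \ref{lem2}. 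A smaller omission: when $C$ is finite, carrying the (non-multilinear) identity over to $R\otimes_C\bar C$ needs the linearization trick of Lemma \ref{le} (available since $char(R)\neq 2$); as written, ``extending the centroid to its algebraic closure'' skips this.
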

\begin{proof}
	If $u^2$ is a centrally valued element in $R$, then $R$ satisfies $s_4$. Now, suppose that $u^2$ is not central, and let $S$ be the additive subgroup of $R$ generated by $S = \{u^2: u \in [R,R]\}$. It is evident that $S \neq 0$, and we have $a_1x + a_2xa_3 + a_4xa_5 = 0$ for all $x \in S$.

According to \cite{chuang1987additive}, either $S \subset Z(R)$, or $char(R) = 2$ and $R$ satisfies $s_4$, unless $S$ contains a non-central ideal $L^{\prime}$ of $R$. Since $u^2$ is not centrally valued in $R$, the first case is not possible. Also since
$char(R) \neq  2$, therefore $S$ contains a non-central Lie ideal $L^{\prime}$ of $R$. By \cite{bergen1981lie}, there exists a non-central
two-sided ideal $I$ of $R$ such that $[I, R] \subseteq L^{\prime}$. By the hypothesis, we have:
	\begin{equation*}
	a_1[r_1,r_2] +a_2[r_1,r_2]a_3 +a_4[r_1,r_2]a_5=0
	\end{equation*} 
 for all $r_1,r_2 \in I$. From Fact \ref{fac2}, $R, I$, and $U$ satisfy the same GPI, we have
 \begin{equation*}
	a_1[r_1,r_2] +a_2[r_1,r_2]a_3 +a_4[r_1,r_2]a_5=0
	\end{equation*} for all $r_1,r_2 \in R$. Hence from [\cite{de2021some}, Proposition 2.13], we get our conclusions.
	\end{proof}
\begin{lem}\label{lem1}
Let $R = M_m(C)$, $m \geq 2$, be the ring of all $m \times m$ matrices over an infinite field $C$ with characteristics not equal to $2$ and $a, b, c, u, v, p, q \in R$ such that 
\begin{align*}
    pXaX +pXbXu+aX^2q+bXuXq-cX^2-bX^2v = 0
\end{align*}
for all $X=[x_1,x_2] \in [R,R]$, then either $b \in C$ or $u \in C$ or $p + q \in C$.
\end{lem}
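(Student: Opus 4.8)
The plan is to argue by contradiction: suppose $b\notin C$, $u\notin C$ and $p+q\notin C$, so that $b$, $u$ and $p+q$ are three non-scalar elements of $M_m(C)$. Since $C$ is infinite, Fact~\ref{fac9} yields an invertible $T\in M_m(C)$ such that $TbT^{-1}$, $TuT^{-1}$ and $T(p+q)T^{-1}$ have every entry nonzero. As $[TxT^{-1},TyT^{-1}]=T[x,y]T^{-1}$, conjugation by $T$ preserves the set $[R,R]$, and conjugating the whole identity by $T$ replaces each coefficient $p,q,a,b,c,u,v$ by its conjugate while leaving the shape of the identity unchanged. Relabelling, I may therefore assume from the outset that $b$, $u$ and $p+q$ have all entries nonzero. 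I will then evaluate the identity on the two kinds of commutators $e_{ij}=[e_{ii},e_{ij}]$ (for $i\ne j$) and $e_{ii}-e_{jj}=[e_{ij},e_{ji}]$.

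First I would substitute the nilpotent commutators $X=e_{ij}$, $i\ne j$. Because $e_{ij}^{2}=0$, all three terms containing $X^{2}$ vanish, and using $e_{ij}we_{ij}=w_{ji}e_{ij}$ the identity collapses to
\[
a_{ji}\,pe_{ij}+b_{ji}\,pe_{ij}u+u_{ji}\,be_{ij}q=0\qquad(i\ne j).
\]
Here each surviving term has rank at most one: $pe_{ij}u$ is the outer product of the $i$th column of $p$ with the $j$th row of $u$, and similarly for $be_{ij}q$. Multiplying the displayed relation on the left and on the right by suitable matrix units and reading off entries, while repeatedly using that all entries of $b$ and $u$ are nonzero, I would derive: (i) if some diagonal entry of $p$ vanishes then in fact $p=q=0$, so $p+q\in C$, a contradiction; hence every diagonal entry of $p$ is nonzero; (ii) $q$ must take the shape $q=-N-Mu$ with $N,M$ diagonal; and (iii) for each $i$ the $i$th column of $p$ is a scalar multiple of the $i$th column of $b$, unless the corresponding diagonal entries of $q$ vanish. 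Imposing consistency of the scalars occurring in (ii) and (iii) as the pair $(i,j)$ varies forces an exact cancellation of the contribution of $p$ — the diagonal scaling in $M$ and the columnwise scaling of $p$ against $b$ turn out to be mutually inverse — and therefore $p+q$ is a diagonal matrix.

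To upgrade ``diagonal'' to ``scalar'' I would next use $X=e_{ii}-e_{jj}$, for which $X^{2}=e_{ii}+e_{jj}\ne 0$, so the terms $-cX^{2}$ and $-bX^{2}v$ now re-enter. Feeding in the columnwise descriptions of $p$ and $q$ already obtained (and, on the branches where the column of $p$ is not proportional to that of $b$, the vanishing of the corresponding diagonal entries of $q$ that that branch forces), these equations equate all diagonal entries of $p+q$ while keeping its off-diagonal entries zero; hence $p+q\in C$, contrary to assumption. The case $m=2$, where only the single index pair is available, is handled by the same scheme.

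The technical heart, and the main obstacle, is the bookkeeping in the second step: matching two rank-one matrices always splits into a ``proportional columns'' branch and a ``vanishing coefficient'' branch, so the argument must carry several branches simultaneously, and the conclusion about $p+q$ emerges only after the contributions of $p$ and of $q$ — which enter the GPI separately but the statement only through their sum — are shown to interlock through the consistency relations over different $(i,j)$. Verifying that cancellation cleanly, rather than performing the individual substitutions, is where the work lies.
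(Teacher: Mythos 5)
Your opening coincides with the paper's own proof: assume $b$, $u$ and $p+q$ are all non-central, invoke Fact~\ref{fac9} together with the invariance of the identity under conjugation to normalize all entries of $b$, $u$, $p+q$ to be nonzero, and evaluate at $X=e_{ij}$, $i\neq j$, which (since $e_{ij}^2=0$) collapses the identity to $a_{ji}\,pe_{ij}+b_{ji}\,pe_{ij}u+u_{ji}\,be_{ij}q=0$. At this point you are one line from the end and you miss it: multiply this relation by $e_{ij}$ on both left and right (equivalently, read off its $(j,i)$ entry). The term $a_{ji}pe_{ij}$ dies because $e_{ij}e_{ij}=0$, and the other two give $b_{ji}u_{ji}\,(p_{ji}+q_{ji})\,e_{ij}=0$, i.e. $(p+q)_{ji}\,u_{ji}\,b_{ji}=0$ for every $i\neq j$, which contradicts the normalization that every entry of $b$, $u$ and $p+q$ is nonzero. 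That single sandwich is the paper's entire argument after the substitution $X=e_{ij}$.

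Instead, you launch into a structural analysis of $p$ and $q$ separately (claims (i)--(iii), the ``interlocking'' of diagonal scalings, then a second substitution $X=e_{ii}-e_{jj}$), and this is where the genuine gap lies: none of these claims is actually proved, and you yourself concede that verifying the crucial cancellation ``is where the work lies,'' so what you offer is a plan rather than a proof. The second stage is particularly doubtful: with $X=e_{ii}-e_{jj}$ one has $X^2=e_{ii}+e_{jj}\neq 0$, so the terms $aX^2q$, $cX^2$ and $bX^2v$ re-enter, and the completely unconstrained matrices $a$, $c$, $v$ appear in every resulting relation; nothing in your sketch explains how to eliminate them so as to extract information about $p+q$ alone. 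Note also that your intended endpoint overshoots your own normalization: after conjugation all off-diagonal entries of $p+q$ are nonzero, so already the intermediate claim ``$p+q$ is diagonal'' would be the desired contradiction --- a sign that the hypothesis of all-nonzero entries is not being used where it is decisive. The missing idea is simply the entry extraction $e_{ij}(\,\cdot\,)e_{ij}$ applied to the relation you already wrote down; with it, the entire branch bookkeeping becomes unnecessary.
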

\begin{proof}
Suppose the field $C$ is infinite. From the hypothesis, $R$ satisfies the following generalized polynomial identity
\begin{equation}\label{eq1}
   pXaX +pXbXu+aX^2q+bXuXq-cX^2-bX^2v = 0 
\end{equation} 
for all $X\in [R,R]$. Assume that $p+q,b$, and $u$ are noncentral elements. Now since Equation (\ref{eq1}) is invariant under the action of any automorphism on $R$, then by Fact \ref{fac9} we can assume that all the entries of $p+q, b, u$ are nonzero. Choosing $X= e_{ij}$ in Equation (\ref{eq1}), we get
\begin{equation}\label{eq2}
   pe_{ij}ae_{ij} +pe_{ij}be_{ij}u+be_{ij}ue_{ij}q=0.
\end{equation} Now right and left multiplying Equation (\ref{eq2}) by $e_{ij}$, we get
\begin{equation*}
    (p+q)_{ji}u_{ji}b_{ji}e_{ij}=0
\end{equation*} which implies either $(p+q)_{ji}=0$ or $u_{ji}=0$ or $b_{ji}=0$. In each case, we get a contradiction, therefore  either $p+q \in C$ or $b\in C$ or $u\in C$.
\end{proof}
\begin{lem}\label{le}
Let $R = M_m(C)$, $m \geq 2$, be the ring of all $m \times m$ matrices over a  field $C$ with characteristics not equal to $2$ and $a, b, c, u, v, p, q \in R$ such that 
\begin{align*}
    pXaX +pXbXu+aX^2q+bXuXq-cX^2-bX^2v = 0
\end{align*}
for all $X=[x_1,x_2] \in [R,R]$, then either $b \in C$ or $u \in C$ or $p + q \in C$.
\end{lem}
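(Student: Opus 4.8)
The plan is to deduce Lemma~\ref{le} from its infinite-field counterpart, Lemma~\ref{lem1}, by passing to a sufficiently large scalar extension. If $C$ is infinite there is nothing to prove, so assume $C$ is finite; since $\mathrm{char}(R)\neq 2$, the prime field of $C$ has odd characteristic and hence $|C|\geq 3$. Fix an infinite field $K$ containing $C$ — for definiteness the algebraic closure $\bar C$, though $C(t)$ would serve equally well — and set $\bar R=M_m(K)$, so that $R=M_m(C)$ embeds in $\bar R$ and the elements $a,b,c,u,v,p,q$ may be regarded inside $\bar R$.

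The first step is to verify that $\bar R$ satisfies the very same generalized identity
\[
pXaX+pXbXu+aX^2q+bXuXq-cX^2-bX^2v=0,\qquad X=[x_1,x_2].
\]
To see this, replace $x_1,x_2$ by generic matrices $(\xi_{kl})$, $(\eta_{kl})$; the left-hand side becomes an $m\times m$ matrix each of whose entries is a polynomial over $C$ in the $2m^2$ commuting indeterminates $\xi_{kl},\eta_{kl}$. Every entry of $[x_1,x_2]$ is bilinear in the two families of variables, and each summand of the left-hand side carries exactly two factors $[x_1,x_2]$; hence each resulting entry-polynomial has degree at most $2$ in each single variable $\xi_{kl}$ and at most $2$ in each single variable $\eta_{kl}$. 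That $R$ satisfies the identity means precisely that all these entry-polynomials vanish on $C^{2m^2}$, and since their degree in each variable is at most $2\leq|C|-1$, each one is the zero polynomial and therefore also vanishes on $K^{2m^2}$. Consequently $\bar R$ satisfies the same generalized identity.

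Now Lemma~\ref{lem1}, applied to $\bar R=M_m(K)$ with $K$ infinite, gives $b\in K$ or $u\in K$ or $p+q\in K$ (as scalar matrices). Since $b,u,p,q$ have all their entries in $C$, any one of them that is scalar over $K$ is already scalar over $C$ — the common diagonal value lies in $C$. Hence $b\in C$ or $u\in C$ or $p+q\in C$, which is exactly the assertion of Lemma~\ref{le}.

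The only delicate point, and the place to be careful, is the degree bookkeeping in the middle step: one must record that $\mathrm{char}(R)\neq 2$ forces $|C|\geq 3$, so that ``degree at most $2$ in each variable'' really does fall in the range where vanishing on $C^{N}$ forces a polynomial to be identically zero. Were $\mathbb{F}_2$ allowed, this transfer of the identity to $M_m(K)$ could genuinely fail, which is precisely why Lemma~\ref{lem1} was stated only over an infinite field — its proof invokes Fact~\ref{fac9}, which requires the ground field to be infinite. Beyond this observation the argument is entirely routine, so I expect no substantive obstacle.
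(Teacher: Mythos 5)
Your argument is correct, and it shares the paper's overall skeleton: pass to $\bar R=M_m(K)$ for an infinite extension $K\supseteq C$, transfer the identity, apply Lemma~\ref{lem1}, and descend the conclusion using the fact that a matrix with entries in $C$ that is scalar over $K$ is scalar over $C$. Where you differ is in the justification of the transfer step, which is the only real content of this lemma. The paper notes that $Q(r_1,r_2)$ has multidegree $(2,2)$, passes to its complete linearization $\Theta(r_1,r_2,x_1,x_2)$, uses the standard fact that a multilinear GPI for $R$ is automatically a GPI for $\bar R\cong R\otimes_C K$, and then recovers $Q$ on $\bar R$ from $\Theta(r_1,r_2,r_1,r_2)=4Q(r_1,r_2)$, with $\mathrm{char}\,C\neq 2$ used to make the coefficient $4$ harmless. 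You instead substitute generic matrices and invoke the fact that a polynomial whose degree in each variable is less than $|C|$ and which vanishes on $C^{N}$ is identically zero, with $\mathrm{char}\,C\neq 2$ entering only through $|C|\geq 3$, so that degree $\leq 2$ is admissible; your degree bookkeeping (each summand contains exactly two factors $[x_1,x_2]$, each entry of which is bilinear) is accurate, and your descent of scalarity from $K$ to $C$ is stated correctly. The linearization route is the customary one in this GPI literature and works for arbitrary prime rings, not just matrix rings, which is why the paper phrases it that way; your generic-matrix route is more elementary and avoids linearization bookkeeping, but it is tied to the setting $R=M_m(C)$ — which is exactly the hypothesis here, so nothing is lost. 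Both uses of $\mathrm{char}\,C\neq 2$ are genuine, just deployed at different points.
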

\begin{proof}
If $C$ is an infinite field, we are done by Lemma \ref{lem1}. Otherwise, let's consider the case when the field $C$ is finite. Let $K$ be an infinite extension field of $C$, and let $\bar{R} = M_m(K) \cong R \otimes_C K$. It is important to note that a multilinear polynomial is centrally valued on $R$ if and only if it is centrally valued on $\bar{R}$.

Consider the generalized polynomial identity for $R$ given by
\begin{align}\label{eq3}
Q(r_1, r_2) = p[r_1,r_2]a[r_1,r_2] + p[r_1,r_2]b[r_1,r_2]u + a[r_1,r_2]^2q \nonumber\\
+ b[r_1,r_2]u[r_1,r_2]q - c[r_1,r_2]^2 - b[r_1,r_2]^2v.
\end{align}
This polynomial is of multidegree $(2, 2)$ with respect to the indeterminates $r_1$ and $r_2$. Hence, the complete linearization of $Q(r_1, r_2)$ gives a multilinear generalized polynomial $\Theta(r_1, r_2, x_1, x_2)$ in four indeterminates. Furthermore, we have $\Theta(r_1, r_2, r_1, r_2) = 4Q(r_1, r_2)$.

It is evident that the multilinear polynomial $\Theta(r_1, r_2, x_1, x_2)$ is a generalized polynomial identity for both $R$ and $\bar{R}$. Since the characteristic of $R$ is different from $2$ by assumption, we obtain $Q(r_1, r_2) = 0$ for all $r_1, r_2 \in \bar{R}$. Thus, the conclusion follows from Lemma \ref{lem1}.
\end{proof}
\begin{lem}\label{lem2}
Let $R$ be a prime ring of characteristic different from 2 with Utumi
quotient ring $U$ and extended centroid $C$. Suppose that for some $a, b, c, u, v, p, q
\in R$\begin{equation*}
   pXaX +pXbXu+aX^2q+bXuXq-cX^2-bX^2v = 0 
\end{equation*} 
for all $X=[x_1,x_2] \in [R,R]$. Then either $b \in C$ or $u$ or $p + q \in C$.
\end{lem}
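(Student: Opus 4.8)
The plan is to reduce the statement to the matrix case settled in Lemmas \ref{lem1} and \ref{le} by invoking the structure theory of prime rings satisfying a generalized polynomial identity. Setting $X=[x_1,x_2]$, the hypothesis says $R$ satisfies the GPI $g(x_1,x_2)=0$, where $g$ is the expression in the statement; by Fact \ref{fac2} the Utumi quotient ring $U$ satisfies it too, and $U$ is a centrally closed prime $C$-algebra. Suppose, towards a contradiction, that $b\notin C$, $u\notin C$ and $p+q\notin C$. Were $g(x_1,x_2)$ a trivial GPI of $U$, a routine comparison of the coefficients of the reduced monomials in the free product $U *_C C\langle x_1,x_2\rangle$ would produce $C$-linear relations among $1,a,b,c,p,q,u,v$ forcing one of $b,u,p+q$ into $C$, contrary to assumption; hence $U$ satisfies a \emph{non-trivial} GPI. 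By Martindale's theorem (see \cite{beidar1995}), $U$ is then a primitive ring with non-zero socle $H$, it embeds densely in $\mathrm{End}_D(V)$ for some right vector space $V$ over a division ring $D$ with $Z(D)=C$ and $\dim_C D<\infty$, and $H$ is the ideal of finite-rank transformations. If $\dim_D V<\infty$ we are already in the matrix situation (take $e=1$ below), so assume $\dim_D V=\infty$.

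Next I localize the identity at idempotents of the socle. For an idempotent $e\in H$ of rank $k$ one has $eUe\cong M_k(D)$, and since $eXe=X$ for every $X\in[eUe,eUe]$, multiplying $g=0$ on the left and right by $e$ shows that $M_k(D)$ satisfies the \emph{same}-shaped identity with coefficients $epe,\,eae,\,ebe,\,ece,\,eue,\,eve,\,eqe$. Passing to the algebraic closure, $M_k(D)\otimes_C\overline{C}\cong M_N(\overline{C})$; multilinearizing the degree-$(2,2)$ identity exactly as in the proof of Lemma \ref{le} and using $\mathrm{char}(R)\neq 2$, the identity persists on $M_N(\overline{C})$, so Lemma \ref{lem1} applies and gives $ebe\in Ce$, or $eue\in Ce$, or $e(p+q)e\in Ce$ --- and this now holds for \emph{every} idempotent $e\in H$.

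Finally I construct a single idempotent defeating this trichotomy. Since $b$, $u$ and $p+q$ are non-scalar transformations of $V$, choose $\xi_i\in V$ $(i=1,2,3)$ so that $\xi_1D+b\xi_1D$, $\xi_2D+u\xi_2D$ and $\xi_3D+(p+q)\xi_3D$ are each two-dimensional; let $W$ be the (finite-dimensional) $D$-span of these six vectors and, using density of $U$ in $\mathrm{End}_D(V)$, pick an idempotent $e\in H$ with $eV\supseteq W$ that restricts to the identity on each $\xi_iD$. Then $ebe$, $eue$, $e(p+q)e$ all act non-scalarly on $eV$, so none of them lies in $Ce=Z(eUe)$, contradicting the previous paragraph; therefore $b\in C$ or $u\in C$ or $p+q\in C$. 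The crux is this last paragraph: one must verify, using density and the behaviour of scalars over the possibly non-commutative $D$, that the non-centrality of $b$, $u$ and $p+q$ in $U$ can be detected simultaneously inside one corner $eUe$ of the socle, and that the central elements of $eUe$ are precisely $Ce$ (which rests on $Z(D)=C$). The trivial-GPI bookkeeping in the first paragraph is tedious but routine, and the reduction of $M_k(D)$ to a matrix ring over a field, needed to apply Lemma \ref{lem1}, deserves explicit mention when $D\neq C$.
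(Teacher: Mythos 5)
Your proposal is correct and follows essentially the same route as the paper: the trivial/non-trivial GPI dichotomy in the free product $U *_C C\{x_1,x_2\}$, Martindale's theorem plus Jacobson density to reduce to the matrix case settled by Lemmas \ref{lem1} and \ref{le}, and, in the infinite-dimensional case, localization of the identity at a finite-rank idempotent of the socle to contradict the simultaneous non-centrality of $b$, $u$ and $p+q$. The only differences are cosmetic: you keep the division ring $D$ and extend scalars at the corner $eUe\cong M_k(D)$ via the multilinearization trick of Lemma \ref{le}, and you build the idempotent from vectors by density, whereas the paper first replaces $U$ by $U\otimes_C\bar{C}$ (so the division ring is a field) and obtains the idempotent from Litoff's theorem applied to commutator witnesses $h_1,h_2,h_3$.
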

\begin{proof}
\textbf{Case 1:}
Suppose that none of $b$, $u$, or $p+q$ is central. By hypothesis, we have
\begin{align}h(x_1,x_2) = p[x_1,x_2]a[x_1,x_2] + p[x_1,x_2]b[x_1,x_2]u \nonumber\\
+a[x_1,x_2]^2q+ b[x_1,x_2]u[x_1,x_2]q-c[x_1,x_2]^2 - b[x_1,x_2]^2v
\end{align}
for all $x_1,x_2 \in R$. Let $T = U \star_{C} C\{x_1, x_2\}$, which is the free product of $U$ and $C\{x_1, x_2\}$, the free $C$-algebra in non-commuting indeterminates $x_1, x_2$. Since $R$ and $U$ satisfy the same generalized polynomial identities (GPI) (see Fact \ref{fac2} and \ref{fac3}), $U$ satisfies $h(x_1, x_2) = 0$ in $T$.

Suppose that $h(x_1, x_2)$ is a trivial GPI for $R$. Then, $h(x_1, x_2)$ is a zero element in $T = U *_{C} C\{x_1 , x_2\}$. However, since $b$, $u$, and $p+q$ are not central, the terms $b[x_1,x_2]u[x_1,x_2]q$ and $p[x_1,x_2]b[x_1,x_2]u$ appear non-trivially in $h(x_1,x_2)$, which leads to a contradiction. Therefore, we conclude that at least one of $b$, $u$, or $p+q$ belongs to $C$.

\textbf{Case 2:}
Next, suppose that $h(x_1,x_2)$ is a non-trivial GPI for $U$. In the case when $C$ is infinite, we have $h(x_1, x_2) = 0$ for all $x_1, x_2 \in U \otimes_{C} \bar{C}$, where $\bar{C}$ is the algebraic closure of $C$. Since both $U$ and $U \otimes_{C} \bar{C}$ are prime and centrally closed (see Theorem 2.5 and 3.5 of \cite{erickson1975pr}), we may replace $R$ by $U$ or $U \otimes_{C} \bar{C}$ depending on whether $C$ is finite or infinite. Then $R$ is centrally closed over $C$ and $h(x_1,x_2) = 0$ for all $x_1, x_2 \in R$.

By Martindale’s theorem \cite{martindale1969pr}, $R$ is a primitive ring with nonzero socle, $soc(R)$ and with $C$ as its associated division ring. Then, by Jacobson’s theorem (see p.75 of \cite{N. Jacobson1956}), $R$ is isomorphic to a dense ring of linear transformations of a vector space $V$ over $C$. Assume first that $V$ is finite-dimensional over $C$, i.e., $dim_{C} V = m$. By the density of $R$, we have $R \cong M_m(C)$. Since $R$ is non-commutative, we have $m \geq 2$. In this case, we obtain our conclusions by Lemma \ref{lem1}.

Next, we suppose that $V$ is infinite-dimensional over $C$. For any $e^2 = e \in \text{soc}(R)$, we have $eRe \cong M_t(C)$ with $t = \dim_{C} Ve$. We shall prove this case by contradiction. Since none of $b, u$, and $p + q$ are central, there exist $h_1, h_2, h_3 \in \text{soc}(R)$ such that $[b, h_1] \neq 0$, $[u, h_2] \neq 0$, and $[p + q, h_3] \neq 0$. By Litoff’s Theorem \cite{c}, there exists an idempotent $e \in \text{soc}(R)$ such that $bh_1, h_1b, uh_2, h_2u, (p + q)h_3, h_3(p + q), h_1, h_2, h_3 \in eRe$. Since $R$ satisfies the generalized identity, we rewrite this as:
\begin{align}
    e\big\{p[ex_1e , ex_2e]a[ex_1e , ex_2e] + p[ex1e , ex_2e]
b[ex_1e , ex_2e]u\nonumber \\ + a[ex_1e , ex_2e]^2q
 + b[ex_1e , ex_2e]u[ex_1e , ex_2e]q\nonumber\\
 - c[ex_1e , ex_2e]^2 - b[ex_1e , ex_2e]^2v \big\}e=0
\end{align}
for all $x_1,x_2 \in R$. Then the subring $eRe$ satisfies
\begin{align}
    epe[x_1,x_2]eae[x_1,x_2] + epe[x_1,x_2]ebe[x_1,x_2]eue\nonumber \\
+eae[x_1,x_2]^2eqe+ ebe[x_1,x_2]eue[x_1,x_2]eqe\nonumber \\
-ece[x_1,x_2]^2-ebe[x_1,x_2]^2eve=0.
\end{align}
for all $x_1,x_2 \in R$. Then by the above finite-dimensional case, either $ebe$ or $eue$ or $e(p + q)e$ is a central
element of $eRe$. Thus either $bh_1 = (ebe)h_1 = h_1ebe = h_1b$ or $uh_2 = (eue)h_2 =
h_2(eue) = h_2u$ or $(p + q)h_3 = e(p + q)eh_3 = h_3(e(p + q)e) = h_3(p + q)$, a contradiction.  Thus we have either $ b \in C$ or $u \in C$ or $p + q \in C$.
\end{proof}
\begin{prop}\label{prop1}
Let $R$ be a prime ring of characteristic different from 2, $U$ be its Utumi ring of quotients  with extended centroid $C$, and $L$ be a Lie ideal of $R$. Let $F, G$ be two $b$-generalized skew inner derivations of $R$ with associated pair $(b, \alpha)$. Suppose there exist elements $p, q \in R$ such that
\begin{equation*}
    puF(u)+F(u)uq= G(u^2),\ \text{with}\  p+q \notin C,\ \forall\ u \in L.
\end{equation*} Then for all $x \in R$ one of the following holds:
\begin{enumerate}
    \item There exists $a \in C$, and $c \in U$ for which $F(x) = ax, G(x) = xc$ with $pa =
c - aq \in C$ and $p \in C$.
\item There exists $a \in C$, and $c, c^{\prime} \in U$ for which $F(x) = ax$, $G(x) = cx + xc^{\prime}$ with
$(p + q)a = c + c^{\prime}$.
\item There exists $a \in C$, and $c \in U$ for which $F(x) = ax, G(x) = cx$ with $(p+q)a = c$
and $q \in C$.
\item There exist  $a,c^{\prime}  \in C$ and $c \in U$ for which $F(x) = ax$, $G(x) = (c+c^{\prime})x$ with $aq =
c +c^{\prime}- ap \in C$.
\item There exists $a,c\in C$ and $c^{\prime}\in U$ for which $F(x)=ax$ and $G(x)=x(c+c^{\prime})$ with $p\in C$ and $pa=(c+c^{\prime})-qa\in C.$
\item There exists $a,b, u,v,t \in U$ and $\lambda, \eta \in C$ such that $F(x)=(a+bu)x$, $G(x)=cx+btxt^{-1}v$ with $t^{-1}v+\eta  q=\lambda\in C$, $(a+bu)+\eta bt=0,$ and $p(a+bu)-c= \lambda bt$.
\item If $R$ satisfies $s_4$ and we have the following conclusions:
\begin{enumerate}
    \item There exist $a,b,c,u,v \in U$ such that $F(x) = (a+bu)x$, $G(x) = cx + xbv$ with $(p+q)(a+bu)=(c+bv)$ for all $x \in R$.
   \item There exist $a,b,c,u,v \in U$ and an invertible $t\in U$ such that $F(x) = (a+bu)x$, $G(x) = cx + btxt^{-1}v$ for all $x \in R$ with $(p+q)(a+bu)=(c+bv)$. 
\end{enumerate}
\end{enumerate}
\end{prop}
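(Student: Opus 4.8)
The plan is to substitute the given inner shapes $F(x)=ax+b\alpha(x)u$ and $G(x)=cx+b\alpha(x)v$ (with $a,c\in U$ and $u,v\in U$ the parameters of the associated inner skew derivations) into the hypothesis and push it down to a generalized polynomial identity on all of $R$. Since $L$ is a non-central Lie ideal of the prime ring $R$ with $\mathrm{char}(R)\neq 2$, there is a non-zero two-sided ideal $I$ with $[I,R]\subseteq L$, so the relation holds for every $w=[x_1,x_2]$ with $x_1,x_2\in I$; by Fact \ref{fac2} it then holds for all $x_1,x_2\in R$. Expanding $pwF(w)+F(w)wq=G(w^2)$ with $w=[x_1,x_2]$ gives
\begin{equation}\label{propexp}
pwaw+pwb\alpha(w)u+aw^2q+b\alpha(w)uwq-cw^2-b\alpha(w)^2v=0,\qquad w=[x_1,x_2],
\end{equation}
and the whole argument is an analysis of \eqref{propexp}, split according to whether the common automorphism $\alpha$ is $U$-inner.

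I would first dispose of the case in which $\alpha$ is not $U$-inner. Then, by the theory of generalized polynomial identities with automorphisms, in \eqref{propexp} one may replace $\alpha(x_1),\alpha(x_2)$ by fresh indeterminates, so that $\alpha(w)$ becomes an independent commutator $Y=[y_1,y_2]$. Setting $Y=0$ leaves $pwaw+aw^2q-cw^2=0$ on $[R,R]$, and setting $w=0$ leaves $bY^2v=0$ on $[R,R]$, which by Lemma \ref{prop1A} forces $R$ to satisfy $s_4$ or $bv=0$. Subtracting off these, the ``cross'' part reads $pwbYu+bYuwq=0$; fixing $Y$ and writing $z=bYu$ one gets $pwz+zwq=0$ on $[R,R]$, and an elementary GPI computation shows that $z\neq 0$ would force $p=\lambda z$ and $q=-\lambda z$ for some $\lambda\in C$, hence $p+q=0\in C$, against the hypothesis; so $b[y_1,y_2]u=0$, whence (as $R$ is non-commutative) $bu=0$. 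Thus in this branch $F(x)=ax$ and $G(x)=cx$, and everything reduces to $pwaw+aw^2q-cw^2=0$ on $[R,R]$. Reducing (via Martindale and, when $V$ is infinite dimensional, Litoff) to $M_m(C)$ and using $F\neq0\neq G$, this identity forces $a\in C$ or $p\in C$; once either is central the term $pwaw$ becomes a scalar multiple of $w^2$, so Lemma \ref{prop1A} applies to the resulting square-of-commutator relations and yields relations of the types recorded in conclusions (1)--(5), with the built-in $s_4$-exception of Lemma \ref{prop1A} producing conclusion (7) in the two-dimensional case.

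The principal case is $\alpha$ $U$-inner, say $\alpha(x)=txt^{-1}$ with $t\in U$ invertible (the identity map being $t\in C$). Put $b_0=bt$, $u_0=t^{-1}u$, $v_0=t^{-1}v$; then $F(x)=ax+b_0xu_0$, $G(x)=cx+b_0xv_0$, and \eqref{propexp} becomes precisely the identity of Lemma \ref{lem2}, namely $pwaw+pwb_0wu_0+aw^2q+b_0wu_0wq-cw^2-b_0w^2v_0=0$ on $[R,R]$. Lemma \ref{lem2} gives $b_0\in C$, or $u_0\in C$, or $p+q\in C$; the last is excluded by hypothesis. If $u_0\in C$ then $b_0xu_0=(b_0u_0)x=(bu)x$, so $F(x)=(a+bu)x$ and $G(x)=cx+btxt^{-1}v$ --- the shape of conclusions (6) and (7); if $b_0\in C$ then $b_0xu_0=x(b_0u_0)$, so $F(x)=ax+x(b_0u_0)$ and $G(x)=cx+x(b_0v_0)$ --- the right-multiplier shape of conclusions (1) and (5). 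In each sub-case I would resubstitute the simplified $F,G$ into \eqref{propexp}, collect the square-of-commutator terms, and apply Lemma \ref{prop1A} (equivalently Proposition 2.13 of \cite{de2021some}); its six alternatives, read against $p+q\notin C$ and $F,G\neq0$, deliver conclusions (1)--(6), with the $s_4$-exception of Lemma \ref{prop1A} again producing conclusion (7). Facts \ref{x}, \ref{1} and \ref{fac 9} are used along the way to normalise the inner skew derivations and to pin down the element $t$.

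The main obstacle is organisational rather than conceptual: one must reconcile two independent dichotomies --- $\alpha$ inner versus outer, and (from Lemma \ref{lem2}) ``$b$ central'' versus ``$u$ central'' --- with the six alternatives of Lemma \ref{prop1A}, and then verify that the many resulting relations among $a,b,c,u,v,p,q,t$ collapse into exactly the seven listed conclusions, keeping careful track throughout of the exclusions forced by $p+q\notin C$ and by $F,G\neq0$. A secondary delicate point is that the $s_4$ case is genuine and cannot be dismissed: since squares of commutators span the trace-zero matrices in $M_2(C)$, the $2\times2$ situation really behaves differently, so conclusion (7) has to be extracted by hand from the low-dimensional case. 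Some care is also needed in the outer-automorphism branch to justify replacing $\alpha(x_i)$ by independent indeterminates and to ensure that $p+q\in C$ does not creep back in through the specialisations.
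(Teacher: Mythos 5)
Your overall route coincides with the paper's: substitute the inner forms, pass to $[R,R]$, split on whether $\alpha$ is $U$-inner, write the inner case as the identity of Lemma \ref{lem2} (the paper's Equation (\ref{eq9})) so that $p+q\notin C$ forces $bt\in C$ or $t^{-1}u\in C$, and finish with the sandwich-type Lemma \ref{prop1A}. Your outer-$\alpha$ branch is handled differently, and in fact more sharply, than in the paper: the paper specializes the Chuang-substituted identity (\ref{eq8a}) back to $y_i=x_i$ and only concludes ``$b\in C$ or $u\in C$'' before re-entering Subcases (a)/(b), whereas you split (\ref{eq8a}) into its multihomogeneous components, obtaining $b[y_1,y_2]^2v=0$, $pw(bYu)+(bYu)wq=0$ and $pwaw+aw^2q-cw^2=0$, whence $b=0$ or $u=0$ (and $bv=0$), so $F(x)=ax$, $G(x)=cx$ and only the last identity survives; your claim that $pwz+zwq=0$ with $z\neq0$ forces $p+q=0$ is correct, though the extension of this linear identity from commutators to all of $R$ (write $w=[x,r]$ and apply Martindale's lemma) should be made explicit. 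Either treatment lands inside conclusions (1)--(7), so this difference is harmless.

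The one place where your plan, as written, would stall is the finish of the inner case. After Lemma \ref{lem2}, the residual identity is not yet of the form covered by Lemma \ref{prop1A}: for $bt\in C$ it is the paper's (\ref{eq10}), namely $pXaX+pX^2bu+aX^2q+XbuXq-cX^2-X^2bv=0$, and the terms $pXaX$ and $XbuXq$ are not of the sandwich shape $a_iX^2a_j$, so one cannot simply ``collect the square-of-commutator terms and apply Lemma \ref{prop1A}''. A further GPI/matrix-unit round is needed to first force $a$ or $p$ (respectively $bu$ or $q$) into $C$ --- this is exactly what the paper compresses into ``again by previous arguments'', yielding the sub-cases $p,pa,bu\in C$, or $a,bu\in C$, or $a,q,buq\in C$ before Equations (\ref{eq111})--(\ref{eqa1}) become amenable to Lemma \ref{prop1A}, and similarly $p,p(a+bu)\in C$ or $a+bu\in C$ in the subcase $t^{-1}u\in C$. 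You carry out precisely this kind of intermediate centralization in your outer branch (forcing $a\in C$ or $p\in C$ before invoking Lemma \ref{prop1A}), so the tool is at hand; it just has to be stated and executed in the inner branch as well, after which the case-by-case bookkeeping you describe does produce conclusions (1)--(7).
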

\begin{proof}
From the hypothesis, $R$ satisfies:
\begin{equation}\label{eq8}
    pXaX+pXb\alpha(X)u+aX^2q+b\alpha(X)uXq-cX^2-b\alpha(X^2)v=0
\end{equation} for all $X=[x_1,x_2] \in [R,R]$.

    \textbf{Case 1 :} Suppose the associated automorphism $\alpha$ with the skew derivations $d$ and $g$ is inner, then there exists an invertible element $t \in R$ such that $\alpha(x)= txt^{-1}$  for all $X \in [R,R]$ and Equation (\ref{eq8}) becomes:
\begin{equation}\label{eq9}
  pXaX+pXbtXt^{-1}u+aX^2q+btXt^{-1}uXq-cX^2-btX^2t^{-1}v=0  
\end{equation}
for all $X \in [R,R]$. Then from Lemma \ref{lem2}, either $bt \in C$ or $t^{-1}u \in C$.

\textbf{Subcase $(a)$ :} If $bt \in C$ then Equation (\ref{eq9}) reduces to:
\begin{equation}\label{eq10}
     pXaX+pX^2bu+aX^2q+XbuXq-cX^2-X^2bv=0  
\end{equation} for all $X \in [R,R]$. Again by previous arguments, one of the following holds:
\begin{enumerate}
    \item $p,pa, bu \in C$.
    \item $a, bu \in C$.
    \item $a,q,buq \in C$.
\end{enumerate}
Now we discuss each of the above cases in detail.
\begin{enumerate}
\item Suppose $pa,p,bu \in C$ then Equation (\ref{eq10}) reduces to:
\begin{equation}\label{eq111}
   (pa-c)X^2+aX^2q+X^2(pbu+buq-bv)=0 
\end{equation} for all $X=[x_1,x_2] \in [R,R]$. Then from Lemma \ref{prop1A} one of the following holds:
\begin{itemize}
    \item $X^2$ is central valued, (i.e. $R$ satisfies $s_4$), then from Equation (\ref{eq111}), we get $(pa-c+aq+pbu+buq-bv)X^2=0$, which implies $(p+q)(a+bu)=(c+bv).$ Thus in this situation we get $F(x) = (a+bu)x$, $G(x) = cx + xbv$   for all $x \in R$ with $(p+q)(a+bu)=(c+bv)$. This is our Conclusion   $7(a)$.
   \item $q\in C,$ this gives that $p+q\in C$, a contradiction.
    \item $q,(pbu+buq-bv)\in C$ this gives that $p+q\in C$, a contradiction
    \item $(pa-c),a\in C$,  this gives that $c\in C$. Thus from Equation (\ref{eq111}), we get $(a+bu)p=(c+bv)-(a+bu)q$. Hence in this case we get $F(x) = (a+bu)x$, $G(x) = x(c+bv)$ for all $x \in R$ with  $(a+bu)p=(c+bv)-(a+bu)q\in C$, which is our Conclusion $(1)$.
    \item $q,(pbu+buq-bv)\in C$ this gives that $p+q\in C$, a contradiction
    \item there exist $\eta,\ \lambda,\ \mu\in C$ such that $(pbu+buq-bv)+\eta q=\lambda$, $a-\eta=\mu$ and $(pa-c)+\lambda=-\mu q\in C$, this gives that $q\in C$ which implies $p+q\in C$, a contradiction.\\ If $\mu=0$ then $pa-c, a \in C$ then by previous arguments we get our Conclusion (1).
    
\end{itemize}
\item Suppose $a, bu \in C$ then Equation (\ref{eq10}) reduces to:
\begin{equation}\label{eq11a}
   (p(a+bu)-c)X^2+aX^2q+X^2(buq-bv)=0
\end{equation} for all $X=[x_1,x_2] \in [R,R]$. Then from Lemma \ref{prop1A} one of the following holds:
\begin{itemize}
    \item $X^2$ is central valued ( i.e. $R$ satisfies $s_4$), then from Equation (\ref{eq11a}), we get  $(p(a+bu)-c)+((a+bu)q-bv)=0$ which implies $ (p+q)(a+bu)=(c+bv)$. Thus in this case we have$F(x) = (a+bu)x$, $G(x) = cx + xbv$ for all $x \in R$ with $ (p+q)(a+bu)=(c+bv)$. This is our Conclusion   $7(a)$.
    \item $q,(buq-bv)\in C$, which gives that $bv\in C$. Thus from the Equation (\ref{eq11a}), we get that
    $(pa+pbu-c+aq+buq-bv)X^2=0$, which implies $(a+bu)q=(c+bv)-(a+bu)p\in C$. Hence in this case we get $F(x) = (a+bu)x$, $G(x) = (c+bv)x$ for all $x \in R$ with $(a+bu)q=(c+bv)-(a+bu)p\in C$, which is our Conclusion $(4)$.
    \item $q,(buq-bv)\in C$, this gives that $bv\in C$. In this situation, we get our conclusion  by previous arguments.
    \item $(p(a+bu)-c) \in C$ then from Equation (\ref{eq11a}) we get that $X^2 (pa+pbu-c+aq+buq-bv)=0$ which implies $(p+q)(a+bu)=(c+bv)$. Thus in this case we get $F(x) = (a+bu)x$, $G(x) = cx + xbv$ for all $x \in R$ with $(p+q)(a+bu)=(c+bv)$, which is our conclusion $(2)$.
    \item $(buq-bv),q\in C$,  this gives that $bv\in C$. Then by the previous arguments, we get our Conclusion $(4)$.
    \item there exist $\eta,\lambda,\mu\in C$ such that $(buq-bv)+\eta q=\lambda$, $a-\eta=\mu$ and $ (p(a+bu)-c)+\lambda=-\mu q\in C$, this gives that $bv\in C$. Thus again by the previous arguments we get our  Conclusion $(4)$.\\ If $\mu=0$ then $p(a+bu)-c\in C$, thus by previous arguments we get our Conclusion (4). 
\end{itemize}
\item Suppose $a,q,buq \in C$, then Equation (\ref{eq10}) reduces to:
\begin{equation}\label{eqa1}
   (ap+aq+buq-c)X^2+pX^2bu-X^2bv=0
\end{equation} for all $X \in [R,R]$. Then from Lemma \ref{prop1A} one of the following holds:
\begin{itemize}
    \item $X^2$ is central valued (i.e. $R$ satisfies $s_4$), then from Equation (\ref{eqa1}) we get $(ap+aq+buq-c+pbu-bv)=0$  which implies $ (p+q)(a+bu)=(c+bv)$ and our functions takes the form $F(x) = (a+bu)x$, $G(x) = cx + xbv$ for all $x \in R$ with $(p+q)(a+bu)=(c+bv)$. This is our Conclusion   $7(a)$.
    \item $bu,bv\in C$ and  our functions takes the form $F(x) = (a+bu)x$, $G(x) = (c+bv)x$ with   $(a+bu)q=(c+bv)-(a+bu)p\in C$ for all $x \in R$  which is our Conclusion $(4)$.
    \item $bu,bv\in C$. In this situation, we get our
conclusion from previous arguments.
    \item $(ap+aq+buq-c),p\in C$ this gives $p+q\in C$, a contradiction.
    \item $p,bv\in C$ this gives $p+q\in C$, a contradiction.    
    \item  there exists $\eta,\lambda,\mu\in C$ such that $-bv+\eta bu=\lambda,$ $p-\eta=\mu$ and $ (ap+aq+buq-c)+\lambda =-\mu bu\in C$, this gives that $p=\eta +\mu\in C$ and hence $p+q\in C$, a contradiction.
 \end{itemize}
\end{enumerate}
\textbf{Subcase $(b)$:} If $t^{-1}u \in C$, then Equation (\ref{eq9}) reduces to:
\begin{equation}\label{eq91}
  pXaX+pXbuX+aX^2q+buX^2q-cX^2-btX^2t^{-1}v=0  
\end{equation}
for all $X \in [R,R]$.  Again by previous arguments, one of the following holds:
\begin{enumerate}
\item $p,p(a+bu)\in C$
\item $a+bu\in C$
 \end{enumerate}
 \begin{enumerate}
     \item Now if $p,p(a+bu)\in C$  Equation (\ref{eq91}) transformed in to
\begin{equation}\label{eq101}
(p(a+bu)- c)X^2+(a+bu)X^2q-btX^2t^{-1}v=0  
\end{equation} 
by Lemma  \ref{prop1A} one of the following holds:
\begin{itemize}
\item $X^2$ is central valued (i.e. $R$ satisfies $s_4$), then from Equation (\ref{eq101})  $-c-bv+(p+q)(a+bu)=0$ and our functions takes the form $F(x) = (a+bu)x$, $G(x) = cx + btxt^{-1}v$ for all $x \in R$ with $(p+q)(a+bu)=(c+bv)$. This is our Conclusion   $7(b)$.
    \item $q,bt\in C$ which implies $p+q\in C$, a contradiction.
    \item $t^{-1}v,q\in C$  which implies $p+q\in C$, a contradiction..
    \item $p(a+bu)-c,(a+bu),bt\in C$ with $p(a+bu)-c+(a+bu)q=bv$ which implies $c\in C$ and $p(a+bu)=-q(a+bu)+(c+bv)\in C$ and our functions takes the form $F(x) = (a+bu)x$, $G(x) = x(c+bv)$  which is our Conclusion $(5)$.
    \item $(a+bu),t^{-1}v\in C$ with $p(a+bu)-c+bv=-(a+bu)q\in C$. Since $F$ and $G$ are nonzero, $a+bu\neq 0$. Therefore,  $p+q\in C$ is a contradiction.    
    \item there exists $\eta,\lambda,\mu\in C$ such that $t^{-1}v+\eta  q=\lambda\in C$, $(a+bu)-\eta bt=\mu\in C$ and $p(a+bu)-c+\lambda bt=-\mu q\in C.$  If $\mu\neq 0$, then $q\in C$ and thus $p+q\in C$, a contradiction.
    \\Again if $\mu=0$ then $t^{-1}v+\eta  q=\lambda\in C$, $(a+bu)+\eta bt=0,$ and $p(a+bu)-c= \lambda bt$. Thus in this situation, we get our conclusion(6).
\end{itemize}
\item 
Now if $(a+bu)\in C$ and Equation (\ref{eq91}) transformed in to
\begin{equation}\label{eq11}
  (p(a+bu)-c)X^2-btX^2t^{-1}v+X^2(a+bu)q=0  
\end{equation}
for all $X \in [R,R]$. By Lemma  (\ref{prop1A}) one of the following holds:
\begin{itemize}
    \item $X^2$ is central valued, i.e. $R$ satisfies $s_4$ and $(p(a+bu)-c)+bv+(a+bu)q=0$ and our functions takes the form $F(x) = (a+bu)x$, $G(x) = cx + btxt^{-1}v$ for all $x \in R$ with $(p+q)(a+bu)=(c+bv)$. This is our Conclusion   $7(b)$.
    \item $t^{-1}v\in C$ and $(p(a+bu)-c)-bv=-(a+bu)q\in C$. In this situation functions $F$ and $G$ take the form, $F(x) = (a+bu)x$, $G(x) = (c+bv)x$ for all $x \in R$ with $(q(a+bu)=(c+bv)-p(a+bu)$. This is  our Conclusion $(5)$.
    \item $t^{-1}v,(a+bu)q\in C$ and $(p(a+bu)+(a+bu)q-c-bv=0$. In this situation, we get our conclusion from previous arguments.
    \item $(p(a+bu)-c),bt\in C$ and $(p(a+bu)+(a+bu)q-c-bv=0$. The functions $F$ and $G$ take the form, $F(x) = (a+bu)x$, $G(x) = cx + xbv$ for all $x \in R$ with $(p+q)(a+bu)=(c+bv)$. This is  our Conclusion $(2)$.
    \item $bt,(a+bu)q,t^{-1}v\in C$ and $(p(a+bu)-c+(a+bu)q=bv\in C$. Thus functions $F$ and $G$ take the form, $F(x) = (a+bu)x$, $G(x) = (c + bv)x$ with $(q(a+bu)=(c+bv)-(a+bu)p$ for all $x \in R$. This is our Conclusion   $(5)$.
    \item there exists $\eta,\lambda,\mu\in C$ such that $(a+bu)q+\eta t^{-1}v=\lambda$, $-bt-\eta=\mu$, $ (p(a+bu)-c)+\lambda=-\mu t^{-1}v\in C$. This gives that $q,t^{-1}v\in C$. The functions $F$ and $G$ take the form, $F(x) = (a+bu)x$, $G(x) = (c+bv)x$ with $(p+q)(a+bu)=(c+bv)$, $q\in C$ for all $x \in R$. This is our Conclusion   $(3)$. \\
    Now if $\mu=0$, then $bt, p(a+bu)-c) \in C.$ Then by previous arguments we get our Conclusion $(2)$.
\end{itemize}
 \end{enumerate}
\textbf{Case $2$ :}
 If $\alpha$ is outer then from \cite{chuang1988gpis}, Equation (\ref{eq8}) reduces to
\begin{equation}\label{eq8a}
    p[x_1,x_2]a[x_1,x_2]+p[x_1,x_2]b[y_1,y_2]u+a[x_1,x_2]^2q+b[y_1,y_2]u[x_1,x_2]q\end{equation}\begin{equation*}-c[x_1,x_2]^2-b[y_1,y_2]^2v=0
\end{equation*} for all $x_1,x_2,y_1,y_2\in R.$ In particular $R$ satisfies the following
\begin{align}\label{eq8b}
p[x_1,x_2]a[x_1,x_2]+p[x_1,x_2]b[x_1,x_2]u+a[x_1,x_2]^2q\nonumber \\
+b[x_1,x_2]u[x_1,x_2]q-c[x_1,x_2]^2-b[x_1,x_2]^2v=0, \ \forall \ x_1,x_2\in R.
\end{align}  Thus by similar arguments used in Lemma \ref{lem1}, we can show that either $b \in C$ or $u \in C$. If $b \in C$ then from Equation(\ref{eq8b}), we get
\begin{equation}\label{eq8c}
    p[x_1,x_2]a[x_1,x_2]+p[x_1,x_2]^2bu+a[x_1,x_2]^2q+[x_1,x_2]bu[x_1,x_2]q\end{equation}\begin{equation*}-c[x_1,x_2]^2-[x_1,x_2]^2bv=0, \ \forall \ x_1,x_2\in R.
\end{equation*} 
Now, Equation (\ref{eq8b}) is similar to Equation (\ref{eq10}), thus we get the required results by similar arguments used in Subcase (a) of Case (1). Again if $u \in C$ then 
from Equation (\ref{eq8b}), we get
\begin{align}\label{eq8d}
p[x_1,x_2]a[x_1,x_2]+p[x_1,x_2]bu[x_1,x_2]+a[x_1,x_2]^2q\nonumber \\
+bu[x_1,x_2]^2q-c[x_1,x_2]^2-b[x_1,x_2]^2v=0,\ \forall\ x_1,x_2\in R.
\end{align}
Now, Equation (\ref{eq8d}) is similar to Equation (\ref{eq91}), thus the conclusion follows from the Subcase (b) of Case 1.
\end{proof}
Now we give the proof of the Theorem \ref{AMP}.
\section{Proof of Theorem \ref{AMP}}
This final section of our paper is dedicated to the proof of the main theorem (Theorem \ref{AMP}). Throughout the proof, we assume that $R$ is not contained in $M_2(C)$. According to \cite{lee1999}, we may assume that there exist $a, c \in U$ and skew derivations $d$ and $g$ associated with the automorphism $\alpha$, such that $F(x) = ax + bd(x)$ and $G(x)= cx+bg(x)$ for all $x \in R$. Since $L$ is non-central and $char(R) \neq 2$, there exists a nonzero ideal $J$ of $R$ such that $0 \neq [J, R] \subseteq L$ (see \cite{herstein1969t}, p.45; \cite{OM1989}, Lemma 2 and Proposition 1; \cite{lanski1972li}, Theorem 4). Therefore, we have:
 $$pXF(X)+F(X)Xq=G(X^2),\ \forall\  X \in [J, J].$$
 Since $R$ and $J$ satisfy the same generalized differential identities, we also have $pXF(X)+F(X)Xq=G(X^2)$, for all $X \in [R, R]$. Then by the hypothesis, $R$ satisfies
 \begin{align} \label{m1}
     p[x,y]a[x,y]+p[x,y]bd([x,y])+a[x,y]^2q+bd([x,y])[x,y]q\nonumber \\
     -c[x,y]^2     -bg([x,y])^2)=0
 \end{align}
 for all $x,y\in R$.
  \begin{center}
     \textbf{$d$ is skew inner and $g$ is skew outer derivation}
 \end{center}
 Since $d$ is  an inner skew derivation of $R$, there exists $b^{\prime}\in U$ such that $d(x)=b^{\prime}x-\alpha(x)b^{\prime}$ for all $x\in {R}$ and Equation (\ref{m1}) reduced to
 \begin{align} \label{m11}
     p[x,y]a[x,y]+p[x,y]b(b^{\prime}([x,y]-\alpha([x,y])b^{\prime})+a[x,y]^2q+b(b^{\prime}([x,y]\nonumber \\
     -\alpha([x,y])b^{\prime})[x,y]q-c[x,y]^2     -bg([x,y])^2)=0
 \end{align}
 for all $x,y\in R$. Apply the definition of $g$ we have 
\begin{align} \label{m2}
     p[x,y]a[x,y]+p[x,y]b(b^{\prime}([x,y]-\alpha([x,y])b^{\prime})
     +a[x,y]^2q \nonumber \\
     +b(b^{\prime}([x,y]     -\alpha([x,y])b^{\prime})[x,y]q-c[x,y]^2    -b\bigg\{\big(g(x)y+\alpha(x)g(y) -g(y)x \nonumber \\-\alpha(y)g(x)\big)[x,y]
     +\alpha([x,y])\big(g(x)y+\alpha(x)g(y)-g(y)x-\alpha(y)g(x)\big)\bigg\}     =0
 \end{align}
for all $x,y\in R$. Since $g$ is an outer derivation of $R$, by Chung’s theorem (see Fact \ref{fac5}) 
in Equation (\ref{m2}), we obtain
 \begin{align} \label{m21}
     p[x,y]a[x,y]+p[x,y]b(b^{\prime}([x,y]-\alpha([x,y])b^{\prime})
     +a[x,y]^2q+b(b^{\prime}([x,y]    \nonumber \\ -\alpha([x,y])b^{\prime})[x,y]q
     -c[x,y]^2  \nonumber \\ -b\bigg\{\big(wy+\alpha(x)z-zx-\alpha(y)w\big)[x,y]     +\alpha([x,y])\big(wy+\alpha(x)z-zx-\alpha(y)w\big)\bigg\}  \nonumber \\   =0
 \end{align}
  for all $x,y,w,z\in R$. In particular taking $z=0$ in Equation (\ref{m21}), we get 
 \begin{align} \label{m3}
b\bigg\{\big(\alpha(x)z-zx\big)[x,y]     +\alpha([x,y])\big(\alpha(x)z-zx\big)\bigg\}     =0
 \end{align}
  for all $x,y,z\in R$. Now, if the automorphism $\alpha$ is not inner,  then from \cite{chuang1988gpis}, Equation (\ref{m3}) reduces to 
\begin{align*} 
b\bigg\{\big(vz-zx\big)[x,y]     +[v,u])\big(vz-zx\big) \bigg\}      =0
 \end{align*}
 for all $x,y,v,z\in R$. In particular 
 \begin{align*} 
2b[x,y]^2     =0, \implies b=0
 \end{align*}
 for all $x,y\in R$, but this makes both $F$ and $G$ inner $b$-generalized skew derivations, contradicting our assumption. Further, if the automorphism is inner,  then there exists $t\in U$ such that $\alpha(x)=txt^{-1}$ and  Equation (\ref{m3}) reduces to 
\begin{align} \label{m4}
b\bigg\{\big(txt^{-1}z-zx\big)[x,y]     +t[x,y]t^{-1}\big(txt^{-1}z-zx\big)\bigg\}     =0
 \end{align}
for all  $x,y,z\in R$. In particular, choosing $z=ty$
 \begin{align*} 
2bt[x,y]^2     =0 \implies bt=0 \Rightarrow b=0
 \end{align*}
 but this again leads to a contradiction.
 \begin{center}
     \textbf{$d$ is skew outer  and $g$ is skew inner derivation}
     \end{center}
       In this case there exists $c\in U$ such that $g(x)=cx-\alpha(x)c$ for all $x\in {R}$ and Equation (\ref{m1}) reduced to
       \begin{align} \label{m5}
     p[x,y]a[x,y]+p[x,y]b(d(x)y+\alpha(x)d(y)-d(y)x-\alpha(y)d(x))\nonumber \\
     +a[x,y]^2q+b(d(x)y     +\alpha(x)d(y)-d(y)x-\alpha(y)d(x))[x,y]q     \nonumber \\
     -c[x,y]^2     -b(c[x,y]^2-\alpha([x,y]^2)c)
     =0
 \end{align}
 for all $x,y\in R$. Since $d$ is an outer derivation on $R$,  from Fact \ref{fac5},  Equation (\ref{m5}) reduces to 
 \begin{align} \label{m51}
     p[x,y]a[x,y]+p[x,y]b(wy+\alpha(x)z-zx-\alpha(y)w) -c[x,y]^2     -b(c[x,y]^2\nonumber \\
     +a[x,y]^2q+b(wy    +\alpha(x)z-zx-\alpha(y)w)[x,y]q     
    -\alpha([x,y]^2)c)     =0
 \end{align}
 for all $x,y,z,w\in R$. In particular choosing $z=0$ in Equation (\ref{m51}), $R$ satisfies the blended component
\begin{align} \label{m6}
     p[x,y]b(\alpha(x)z-zx)     +b(\alpha(x)z-zx)[x,y]q        =0
 \end{align}
 for all  $x,y,z\in R$. Now if the automorphism $\alpha$ is not inner,  then from \cite{chuang1988gpis}, Equation (\ref{m6}) reduces to 
 \begin{align*} 
     p[x,y]b(wz-zx)     +b(wz-zx)[x,y]q        =0
 \end{align*} 
 for all $x,y,w,z\in R$. In particular 
  \begin{align*} 
     p[x,y]b[x,y]    +b[x,y]^2q        =0
 \end{align*} 
 for all $x,y\in R$. Then by Lemma \ref{prop1A}, \ref{lem1}, \ref{le}, \ref{lem2}, we have  $(p+q)\in C$, which is a contradiction. Further if the automorphism $\alpha$ is  inner  then there exists $t\in U$ such that $\alpha(x)=txt^{-1}$ and  Equation (\ref{m6}) reduces to 
  \begin{align} \label{m61}
     p[x,y]b(txt^{-1}z-zx)     +b(txt^{-1}z-zx)[x,y]q        =0
 \end{align}
for all  $x,y,z\in R$. It follows from Posner's theorem \cite{posner1960pri} that there exists a suitable field $K$ and a positive integer $l$ such that $R$ and $M_l(K)$ satisfy the same polynomial identities. For instance, $M_l(K)$ also satisfies Equation (\ref{m61}). Since $R$ does not satisfy $s_4$, we must have $l \geq 3$. Choosing $x=e_{jj}$, $z=te_{ij}$ and $y=e_{ji}$ in Equation (\ref{m6}) we have 
 \begin{align*} 
     p[e_{jj},e_{ji}]b(te_{jj}e_{ij}-te_{ij}e_{jj})     +b(te_{jj}e_{ij}\nonumber \\
     -te_{ij}e_{jj})[e_{jj},e_{ji}]q =pe_{ji}bte_{ij}+bte_{ij}e_{ji}q       =0
 \end{align*}
 which on further solving gives
 \begin{align}  \label{m7}
   pe_{ji}bte_{ij}+bte_{ii}q       =0.
 \end{align}
 Now multiply by $e_{kk}\ (j\neq k)$ from both side of Equation (\ref{m7}) we get
 \begin{align}  \label{m71}
e_{kk}bte_{ii}qe_{kk}=(bt)_{ki}(q)_{ik}  e_{kk}     =0.
 \end{align}
 This gives one of $bt$ or $q$ is central. Suppose $bt\in C$ then from Equation (\ref{m7}), we have
 \begin{align}  \label{m8}
  bt pe_{ji}e_{ij}+bte_{ii}q =bt(pe_{jj}+e_{ii}q)      =0.
 \end{align}
 Since $bt$ is central, multiply from left side by $(bt)^{-1}$ in above Equation (\ref{m8}), we get 
 \begin{align}  \label{m9}
 pe_{jj}+e_{ii}q     =0.
 \end{align}
 Multiply by $e_{kk}$ $(k\neq i)$ in above Equation (\ref{m9}), we get that
 \begin{align}  
 e_{kk}pe_{jj}=p_{kj} e_{kj}  =0
 \end{align}
 but this gives that $p$ is central. Again by right multiplying by $e_{kk}$ in Equation (\ref{m9}) we can show that $q\in C$ and this gives a contradiction to our initial assumption that $p+q$ is non-central. Now let $q\in C$. Then from Equation (\ref{m7}), we get
 \begin{align}  \label{m10}
   pe_{ji}bte_{ij}+qbte_{ii} =0.
 \end{align}
 Now multiply both side by $e_{jj} $ in above Equation (\ref{m10}), we get that
  \begin{align}  \label{m101}
   e_{jj}pe_{ji}bte_{ij}e_{jj} \implies p_{ji}(bt)_{ji}=0.
 \end{align}
 This gives either $p$ is central or $bt$ is central, but in both cases, we get a contradiction.
 Now suppose both $d$ and $g$ are skew outer derivations then we have the following cases.
 \begin{center}
     \textbf{$d$   and $g$ are $C$-modulo independent}
     \end{center}
     In this case after applying the definition of $d$ and $g$, $R$ satisfies the following equation 
 \begin{align} \label{k2}
     p[x,y]a[x,y]+p[x,y]b(d(x)y+\alpha(x)d(y)-d(y)x-\alpha(y)d(x))
     +a[x,y]^2q     +b(d(x)y\nonumber \\
     +\alpha(x)d(y)-d(y)x-\alpha(y)d(x))[x,y]q     -c[x,y]^2        -b\bigg\{\big(g(x)y+\alpha(x)g(y)      -g(y)x\nonumber \\
     -\alpha(y)g(x)\big)[x,y]+\alpha([x,y])\big(g(x)y+\alpha(x)g(y)-g(y)x-\alpha(y)g(x)\big)\bigg\}=0
 \end{align}
 for all $x,y\in R$. Then by Chungs’s theorem (see Fact \ref{fac5}), Equation (\ref{k2}) reduces to
 \begin{align} \label{k3}
     p[x,y]a[x,y]+p[x,y]b(wy+\alpha(x)z-zx-\alpha(y)w)
     +a[x,y]^2q -c[x,y]^2 \nonumber \\
     +b(wy+\alpha(x)z-zx-\alpha(y)w)[x,y]q
        -b\bigg(\big(vy+\alpha(x)s-sx-\alpha(y)v\big)[x,y]\nonumber \\
     +\alpha([x,y])\big(vy+\alpha(x)s-sx-\alpha(y)v\big)\bigg)=0
 \end{align}
 for all $x,y,v,s,w,z\in R$. Choose $s=0$ in  Equation (\ref{k3}), 
 \begin{align} \label{k5}
     b\bigg\{(\alpha(x)s-sx)[x,y]     +\alpha([x,y])(\alpha(x)s-sx)\bigg\}=     0
 \end{align}
  for all $s,x,y\in R$. Now, Equation (\ref{k5}) is similar to Equation (\ref{m3}). Therefore, by similar arguments used in Case $1$, we get a contradiction.
  \begin{center}
     \textbf{$d$   and $g$ are $C$-modulo dependent}
     \end{center}
In this scenario, we consider the case ${F}({x}) = a{x} +bd({x})$ and ${G}({x}) = c{x} + bg ({x})$ for all ${x} \in {R}$  and suitable $a, c \in U$, and $d$ and $g$ are non-zero skew derivations of $R$ associated with the automorphism $\alpha$. Furthermore, assume that $d$ and $g$ are linearly $C$-dependent modulo inner skew derivations. Then there exist $\eta,\tau\in C$, $v\in U$ and an automorphism $\phi$ of $R$ such that
$\eta d(x)+\tau g(x)=vx-\phi(x)v,\ \text{ for all}\ x\in R.$\\
\textbf{Case 1:} $\eta\neq 0$ and $\tau\neq 0$. Then we have \vspace{-.1cm}
\begin{align*}
    d(x)= a_1g(x)+(a_2x-\phi(x)a_2), \ \text{for all}\ x\in R, \text{where}\ a_1=-\eta^{-1}\tau,\ a_2=\eta^{-1}v
\end{align*}
    It is worth noting that if $d$ is an inner skew derivation, then, according to Fact \ref{1}, $g$ also becomes an inner skew derivation. In this case, the conclusion can be derived as a consequence of Proposition \ref{prop1}. Hence, in the subsequent analysis, we assume that $d$ is a non-zero outer skew derivation. Consequently, based on Fact \ref{x}, we can conclude that either $\phi = \alpha$ or $a_2 = 0$. In summary, we reach one of the following conclusions: 
\begin{enumerate}
    \item $d(x)= a_1g(x)+(a_2x-\alpha(x)a_2)$
    \item $d(x)= a_1g(x).$
\end{enumerate}
We will now demonstrate that each of the conditions mentioned above leads to a contradiction. For the sake of brevity, we will focus on Case (1), as it can be shown that Case (2) follows from Case (1). Therefore, let $d(x) = a_1g(x) + (a_2x - \alpha(x)a_2)$ for all $x \in R$.

Based on Fact \ref{fac2} and Fact \ref{fac3}, it is well-known that both $J$ and $R$ satisfy the same generalized identities with a single skew derivation. Consequently, Equation (\ref{m1}) is also satisfied by $R$. By substituting the value of $d(x)$ in Equation (\ref{m1}), we can conclude that $R$ satisfies
 \begin{align}\label{x1} 
     p[x,y]a[x,y]+p[x,y]b\big(a_1g([x,y])+(a_2[x,y]-\alpha([x,y]a_2)\big) -c[x,y]^2 \nonumber \\
     +a[x,y]^2q     +b\big(a_1g([x,y])+(a_2[x,y]-\alpha([x,y]a_2)\big)[x,y]q        -bg([x,y])^2)=0
 \end{align}
 for all $x,y\in R$. Now, applying the definition of $g$ in Equation (\ref{x1}), we have
\begin{align}\label{x2} 
     p[x,y]a[x,y]+a[x,y]^2q+b\big(a_1    \big(g(x)y+\alpha(x)g(y)
-g(y)x-\alpha(y)g(x)\big)\nonumber \\
     +p[x,y]b\bigg\{a_1\big(g(x)y+\alpha(x)g(y)-g(y)x-\alpha(y)g(x)\big)+(a_2[x,y]-\alpha([x,y]a_2)\bigg\}\nonumber \\
-b\bigg\{\big(g(x)y+\alpha(x)g(y)-g(y)x-\alpha(y)g(x)\big)[x,y]
     +\alpha([x,y])\big(g(x)y+\alpha(x)g(y)\nonumber \\
     -g(y)x-\alpha(y)g(x)\big)\bigg\}+(a_2[x,y]-\alpha([x,y]a_2)\big)[x,y]q     -c[x,y]^2  =0
 \end{align}
 for all $x,y\in R$. Then using  Fact \ref{fac5} in Equation (\ref{x2}), we obtain
\begin{align}\label{x3} 
     p[x,y]a[x,y]+a[x,y]^2q-c[x,y]^2 \nonumber \\
     +p[x,y]b\bigg\{a_1\big(wy+\alpha(x)v-vx-\alpha(y)w\big)+(a_2[x,y]-\alpha([x,y]a_2)\bigg\}\nonumber \\
     -b\bigg\{\big(wy+\alpha(x)v-vx-\alpha(y)w\big)[x,y]
     +\alpha([x,y])\big(wy+\alpha(x)v-vx-\alpha(y)w\big)\bigg\}\nonumber \\
+b\bigg\{a_1\big(wy+\alpha(x)v
-vx-\alpha(y)w\big)+(a_2[x,y]-\alpha([x,y]a_2)\bigg\}[x,y]q      =0
 \end{align}
for all $x,y,w,v\in R$. Now choosing $w=0$ in Equation (\ref{x3}), we have 
\begin{align}\label{x4} 
     p[x,y]b\big(a_1(wy-\alpha(y)w)\big)+b\big(a_1(wy-\alpha(y)w)\big)[x,y]q   \nonumber \\
-b\big\{(wy-\alpha(y)w)[x,y]     +[\alpha(x),\alpha(y)])(wy-\alpha(y)w)\big\}=0
 \end{align}
 for all $x,y,w,v\in R$. Again from \cite{chuang2005identities}
\begin{align}\label{x5} 
     p[x,y]b\big(a_1(wy-su)\big)+b\big(a_1(wy-su)\big)[x,y]q   \nonumber \\
-b\big\{(wy-su)[x,y]     +[w,s](wy-su)\big\}=0
 \end{align}
 for all $x,y,w,u\in R$. In particular choosing $x=0$ in Equation (\ref{x5}), $R$ satisfies the blended component
\begin{align} \label{00}
     b[w,s](wy-su)=0
 \end{align}
for all $w,s,y,u\in R.$ In particular choosing $y=s$ and $u=w$ in Equation (\ref{00}), we have
\begin{align} 
     b[w,s]^2=0
 \end{align}
for all $w,s\in R,$ which gives that $b=0$, a contradiction.

\textbf{Case 2:} $\eta=0$ and $\tau\neq 0$. Then we have $$g(x)= a_2x-\phi(x)a_2, \ \text{for all}\ x\in R, \text{where}\ a_2=\tau^{-1}v.$$
for all $x,y\in R$. In this case, we can assume that the skew derivation $d$ is not inner. If it were inner, the conclusion would follow from Proposition \ref{prop1}. Additionally, since the automorphism associated with a skew derivation is unique, in this scenario, we have $\phi = \alpha$. Therefore, $R$ satisfies
\begin{align}\label{y3} 
 p[x,y]a[x,y]+p[x,y]bd([x,y])+a[x,y]^2q+bd([x,y])[x,y]q\nonumber \\
     -c[x,y]^2     -b\big(a_2[x,y])^2-\alpha([x,y]^2a_2)\big)=0
 \end{align}
for all $x,y\in R$. Apply the definition of $d$ in Equation (\ref{y3}) we have
\begin{align}\label{y4} 
 p[x,y]a[x,y]+p[x,y]b(d(x)y+\alpha(x)d(y)-d(y)x-\alpha(y)d(x))+a[x,y]^2q-c[x,y]^2   \nonumber \\
+b(d(x)y+\alpha(x)d(y)-d(y)x-\alpha(y)d(x))[x,y]q
  -b\big(a_2[x,y])^2-\alpha([x,y]^2a_2)\big)=0
 \end{align}
for all $x,y \in R$. Then by use of Fact \ref{fac5} in Equation (\ref{y4}), we obtain
\begin{align}\label{y5} 
 p[x,y]a[x,y]+p[x,y]b(wy+\alpha(x)z-zx-\alpha(y)w)+a[x,y]^2q-c[x,y]^2  \nonumber \\
+b(wy+\alpha(x)z-zx-\alpha(y)w)[x,y]q   -b\big(a_2[x,y])^2-\alpha([x,y]^2a_2)\big)=0
 \end{align}
for all $x,y,w,z \in R$. In particular, choosing $z=0$ in Equation (\ref{y5}), $R$ satisfies the blended component
\begin{align}\label{y6} 
 p[x,y]b(\alpha(x)z-zx)+b(\alpha(x)z-zx)[x,y]q=0
 \end{align}
$x,y,z\in R$. The above Equation (\ref{y6}) is similar to Equation (\ref{m6}), therefore this case also leads to a contradiction.

\textbf{Case 3:} $\eta\neq 0$ and $\tau= 0$. Then we have $$d(x)= a_1x-\phi(x)a_1, \ \text{for all}\ x\in R, \text{where}\ a_1=\eta^{-1}v.$$
In a similar manner to Case $2$, here we are assuming that $g$ is not inner and $\alpha=\phi$. Hence $R$ satisfies
\begin{align} \label{z1}
     p[x,y]a[x,y]+p[x,y]b(a_1[x,y]-\alpha([x,y]a_1))
     +a[x,y]^2q+b(a_1[x,y]\nonumber \\
           -b\bigg\{\big(g(x)y+\alpha(x)g(y)-g(y)x-\alpha(y)g(x)\big)[x,y]     +\alpha([x,y])\big(g(x)y+\alpha(x)g(y)\nonumber \\
           -g(y)x           -\alpha(y)g(x)\big)\bigg\}  -\alpha([x,y]a_1))[x,y]q-c[x,y]^2  =0
 \end{align}
 for all $x,y\in R$. Then from Fact \ref{fac5}, we have
\begin{align} \label{z2}
     p[x,y]a[x,y]+p[x,y]b(a_1[x,y]-\alpha([x,y]a_1))     
     -\alpha([x,y]a_1))[x,y]q-c[x,y]^2      \nonumber \\    -b\bigg\{\big(wy+\alpha(x)z-zx     -\alpha(y)w\big)[x,y]     +\alpha([x,y])\big(wy+\alpha(x)z-zx-\alpha(y)w\big)\bigg\}  \nonumber \\    
     +a[x,y]^2q+b(a_1[x,y]=0
 \end{align}
 for all $x,y,w,z\in R$. Choosing $z=0$ in Equation (\ref{z2}), we have
\begin{align}\label{z3} 
 b\big\{\alpha(x)z-zx)[x,y]+\alpha([x,y])(\alpha(x)z-zx)\big\}=0
 \end{align}
for all  $x,y,z\in R$. The above Equation (\ref{z3}) is similar to Equation (\ref{m3}), therefore this case also leads to a contradiction.
 \section*{Acknowledgment}
The authors would like to express their sincere thanks to the reviewers and referees for their valuable comments and suggestions.


\begin{thebibliography}{00}

\bibitem{A23} 
 A. Pandey, G. Scudo, b'-Generalized skew derivations acting as a Jordan derivation on multilinear polynomials in prime rings, Communications in Algebra, 51 (6), 2658-2672, 2023.
	\bibitem{dhara2020en}
	B. Dhara and De Filippis,
	Engel conditions of generalized derivations on left ideals and Lie ideals in prime rings, Communications in Algebra, 48 (1), 154-167, 2020.


\bibitem{lanski1972li}
 C. Lanski,  and M. Susan Montgomery,
 {Lie structure of prime rings of characteristic 2}, Pacific Journal of Mathematics, 42 (1), 117-136, 1972.




\bibitem{chuang2005identities} C. L. Chuang and T.K. Lee, {Identities with a single skew derivation}, Journal of Algebra,
288(1), 59–77, 2005.
\bibitem{chuang1987additive} C. L. Chuang, {The additive subgroup generated by a polynomial}, Israel Journal of Mathematics, 59:98–106, 1987.

\bibitem{chuang1988gpis}
C. L. Chuang,
Gpis having coefficients in utumi quotient rings, Proceedings of the American Mathematical Society, 103 (3), 723–728, 1988. 

	\bibitem{c}
 C.	Faith, and Y. Utumi,
	On a new proof of Litoff's theorem, {Acta Mathematica Academiae Scientiarum Hungarica},  14 (3-4), 369-371, 1963.
	
\bibitem{demir2010re}
{\c{C}}.	Demir, and  N. Arga{\c{c}},
	A result on generalized derivations with Engel conditions on one-sided ideals, Journal of 
 the Korean Mathematical Society, 47 (3), 483-494, 2010.

\bibitem{de2020}
De Filippis,
Product of generalized skew derivations on Lie ideals, Communications in Algebra, 2021.

\bibitem{de2012}
	De Filippis,  and O. Mario,
	Vanishing derivations and centralizers of generalized derivations on multilinear polynomials, Communications in Algebra, 40 (6), 1918-1932,  2012.

\bibitem{de2018}
	De Filippis and F. Wei,
	Centralizers of X-generalized skew derivations on multilinear polynomials in prime rings, Communications in Mathematics and Statistics,  6  (1), 49–71, 2018. 
	
	\bibitem{de 2018}
	De Filippis and F. Wei,
	An engel condition with x-generalized skew derivations on lie ideals. Communications in Algebra, 46 (12), 5433–5446, 2018.	

 \bibitem{de2021some}
De Filippis, B. Prajapati, and S. Tiwari,{ Some generalized identities on prime rings and their application for the solution of annihilating and centralizing problems}, Quaestiones Mathematicae, pages 1–39, 2021.


 \bibitem{V. D. Filippis 2021}
 De Filippis, G. Scudo, and F. Wei,
 b-generalized skew derivations on multilinear polynomials in prime rings, Polynomial Identities in Algebras, pages 109–138. Springer, 2021. 
 
\bibitem{posner1957}
   Edward C. Posner,
  {Derivations in prime rings}, {Proceedings of the American Mathematical Society}, 8 (6), 1093-1100, 1957.
  
	\bibitem{posner1960pri}
Edward C. Posner,   
  {Prime rings satisfying a polynomial identity}, Proceedings of the American Mathematical Society, 11 (2), 180-183, 1960.
  
\bibitem{herstein1969t}
 I. N. Herstein, {Topics in ring theory}, University of Chicago Press, 1969.
 


  \bibitem{kharchenko1978diff}	
	Kharchenko, Vladislav Kirillovich,
	Differential identities of prime rings, Algebra and Logic, 17 (2), 155-168, 1978.

\bibitem{bergen1981lie} J. Bergen, I. Herstein, and J. W. Kerr, {Lie ideals and derivations of prime rings}. Journal of
Algebra, 71(1):259–267, 1981.

	\bibitem{beidar1995}
K. I. Beidar, W. S. Martindale,  and A. V. Mikhalev,
	Rings with generalized identities, CRC Press, 1995.

\bibitem{beidar1978ri}
K. I. Beidar,  
 {Rings with generalized identities. 3.}, {Vestnik Moskovskogo Universiteta Seriya i Matematika, Mekhanika},  4, 66-73, 1978.

 \bibitem{carini2016identities}
 L. Carini, De Filippis and G. Scudo,	
	{Identities with product of generalized skew derivations on multilinear polynomials}, Communications in Algebra, 44 (7), 3122-3138, 2016.
 
 
 \bibitem{brevsar1993centralizing}
M. Bre{\v{s}}ar,	
	{Centralizing mappings and derivations in prime rings}, Journal of Algebra, 156 (2), 385-394, 1993.

  \bibitem{Breaser1991}
M. Bre{\v{s}}ar,	
	{On the distance of the composition of two derivations to the generalized derivations}, Glasgow Mathematical Journal, 33, 89-93, 1991.


	\bibitem{martindale1969pr}
Martindale 3rd, S. Wallace,
	Prime rings satisfying a generalized polynomial identity, Journal of Algebra, 12 (4), 576-584, 1969.
	
\bibitem{argacc2011actions}
N. Arga{\c{c}} and De Filippis, 
Actions of generalized derivations on multilinear polynomials in prime rings, Algebra Colloquium, {18}, {955--964}, 2011.



 \bibitem{N. Jacobson1956}
  N. Jacobson,
  Structure of rings, volume 37. American Mathematical Soc., 1956.
  
\bibitem{OM1989}
O. M. Di Vincenzo, On the n-th centralizer of a Lie ideal, Bollettino dell'Unione Matematica Italiana,  7, 77–85, 1989.

  
  \bibitem{lee1999}
  T. K. Lee, Generalized derivations of left faithful rings, Communications in Algebra, 27 (8), 4057-4073, 1999. 
	
\bibitem{lee1992semiprime}
T. K. Lee,
{Semiprime rings with differential identities}, Bulletin of the Institute of Mathematics, 20 (1), 27-38, 1992.


	\bibitem{erickson1975pr}
T.	Erickson, Martindale 3rd, Wallace Smith, and J. Osborn, 
	Prime non associative algebras, Pacific Journal of Mathematics, 60 (1), 49-63 1975.
	


  \end{thebibliography}
\end{document}